\documentclass[11pt]{article}
 
\usepackage[margin=0.5in]{geometry} 
\usepackage{amsmath,amsthm,amssymb}
\usepackage{mathrsfs}
\usepackage[T1]{fontenc}
\usepackage[english]{babel}
\usepackage{graphicx}
\usepackage{color} 

\usepackage{hyperref}

\usepackage{color}

\usepackage{mathtools}
\usepackage{atbegshi}
\usepackage{pdfpages} 
\usepackage{longtable} 
\usepackage{amsmath,amsthm,amsfonts,amssymb,relsize,bm} 
\usepackage{xparse,xcolor} 
\usepackage{graphicx,float} 
\usepackage{etoolbox}
\usepackage[shortlabels]{enumitem} 
\usepackage{tikz-cd} 
\usepackage{cite} 
\usepackage[normalem]{ulem}
\usepackage{comment}
\usepackage{tocloft}

\numberwithin{equation}{section}

\theoremstyle{plain}
\newtheorem{theorem}{Theorem}[section]

\newtheorem*{theorem*}{Theorem}
\newtheorem{lemma}{Lemma}[section]
\newtheorem{corollary}{Corollary}[section]

\theoremstyle{definition}
\newtheorem{definition}{Definition}[section]
\newtheorem*{definition*}{Definition}

\newtheorem{remark}{Remark}[section]

\begin{document}
 
 \title{An analogue of a formula of Popov II}

\author{Pedro Ribeiro}

\date{}
 
\maketitle

\begin{abstract} Let $r_{k}(n)$ denote the number of representations of the positive
integer $n$ as the sum of $k$ squares. We prove a generalization of a summation formula already proved by us [Advances in Applied Mathematics, 175 (2026) 103201], which involves the arithmetical function $r_{k}(n)$ and the Bessel functions of the first kind. We extend the Bessel functions in the aforementioned formula to Whittaker functions, and our proof of this generalization is drastically different from the proof of the particular case presented in [Advances in Applied Mathematics, 175 (2026) 103201]. 
\end{abstract}

\section{Introduction and Main results}
In a fairly unknown paper \cite{popov_russian}, Popov states the following beautiful result. If $\text{Re}(x)>0$ and $z\in\mathbb{C}$, then
\begin{align}
\frac{z^{\frac{k}{2}-1}\pi^{\frac{k}{4}-\frac{1}{2}}x^{\frac{k}{4}}}{2^{\frac{k}{2}-1}\Gamma\left(\frac{k}{2}\right)}\,e^{z^{2}/8}+\sqrt{x}\,e^{z^{2}/8}\,\sum_{n=1}^{\infty}\frac{r_{k}(n)}{n^{\frac{k}{4}-\frac{1}{2}}}\,e^{-\pi nx}\,J_{\frac{k}{2}-1}(\sqrt{\pi nx}z)\nonumber \\
=\frac{z^{\frac{k}{2}-1}\pi^{\frac{k}{4}-\frac{1}{2}}x^{-\frac{k}{4}}}{2^{\frac{k}{2}-1}\Gamma\left(\frac{k}{2}\right)}\,e^{-z^{2}/8}+\frac{e^{-z^{2}/8}}{\sqrt{x}}\,\sum_{n=1}^{\infty}\frac{r_{k}(n)}{n^{\frac{k}{4}-\frac{1}{2}}}\,e^{-\frac{\pi n}{x}}\,I_{\frac{k}{2}-1}\left(\sqrt{\frac{\pi n}{x}}z\right),\label{Popov intro}
\end{align}
where $r_{k}(n)$ denotes the number of representations of the positive
integer $n$ as a sum of $k$ squares and, as usual, $J_{\nu}(z)$
and $I_{\nu}(z)$ respectively denote the Bessel and modified Bessel
functions of the first kind. 
A couple of reasons why this identity
is fascinating are already provided by Berndt, Dixit, Kim and Zaharescu {[}\cite{berndt_popov}, pp. 3795-3796{]}. For the purposes of our discussion, we repeat verbatim the reasons already explained in \cite{RPOPOV}. 
\begin{enumerate}
\item If we construct the Dirichlet series attached to $r_{k}(n)$, 
\begin{equation}
\zeta_{k}(s)=\sum_{n=1}^{\infty}\frac{r_{k}(n)}{n^{s}},\,\,\,\,\text{Re}(s)>\frac{k}{2},\label{definition zeta k}
\end{equation}
then $\zeta_{k}(s)$ can be continued to the complex plane as a meromorphic
function possessing only a simple pole at $s=\frac{k}{2}$ with residue
$\pi^{k/2}/\Gamma(k/2)$. Moreover, it satisfies the functional equation
\begin{equation}
\pi^{-s}\Gamma\left(s\right)\zeta_{k}(s)=\pi^{s-\frac{k}{2}}\Gamma\left(\frac{k}{2}-s\right)\zeta_{k}\left(\frac{k}{2}-s\right).\label{functional equation zetak}
\end{equation}
Note that, when $k=1$, $r_{1}(n)=2$ if and only if $n$ is a perfect
square and zero otherwise. Therefore, (\ref{definition zeta k}) reduces
to 
\begin{equation}
\zeta_{1}(s):=\sum_{n=1}^{\infty}\frac{r_{1}(n)}{n^{s}}=2\sum_{n=1}^{\infty}\frac{1}{n^{2s}}=2\zeta(2s),\,\,\,\,\text{Re}(s)>\frac{1}{2}.\label{zeta 1(s) definition}
\end{equation}
Furthermore, (\ref{functional equation zetak}) with $k=1$ gives the functional equation
for Riemann's $\zeta-$function
\begin{equation}
\pi^{-s}\Gamma\left(s\right)\zeta(2s)=\pi^{s-\frac{1}{2}}\Gamma\left(\frac{1}{2}-s\right)\zeta\left(1-2s\right).\label{Functional equation Riemann Popov}
\end{equation}
The first point highlighted in \cite{berndt_popov} is that the powers of $n$
in the denominators of both sides of (\ref{Popov intro}) are remindful
of the functional equation (\ref{functional equation zetak}).
\item Riemann's second proof of the functional equation for $\zeta(s)$,
(\ref{Functional equation Riemann Popov}), employs the transformation
formula for Jacobi's $\theta-$function,
\begin{equation}
\theta(x):=\sum_{n\in\mathbb{Z}}e^{-\pi n^{2}x}=\frac{1}{\sqrt{x}}\sum_{n\in\mathbb{Z}}e^{-\frac{\pi n^{2}}{x}}:=\frac{1}{\sqrt{x}}\theta\left(\frac{1}{x}\right),\,\,\,\,\text{Re}(x)>0.\label{jacobi theta intro popov}
\end{equation}
The theta transformation formula associated to the Dirichlet series
$\zeta_{k}(s)$ can be obtained by taking the $k^{\text{th}}$ power
on both sides of (\ref{jacobi theta intro popov}). This results in
the transformation
\begin{equation}
\sum_{n=0}^{\infty}r_{k}(n)\,e^{-\pi nx}=x^{-\frac{k}{2}}\sum_{n=0}^{\infty}r_{k}(n)\,e^{-\frac{\pi n}{x}},\,\,\,\,\text{Re}(x)>0.\label{theta k squares formula}
\end{equation}
Of course, the exponential factors on both sides of (\ref{Popov intro})
remind us the theta transformation formula (\ref{theta k squares formula}).
In fact, (\ref{theta k squares formula}) is a particular case of
(\ref{Popov intro}) when we let $z\rightarrow0$, due to the limiting relations for the Bessel functions {[}\cite{NIST}, p. 223,
eq. (10.7.3){]}
\begin{equation}
\lim_{y\rightarrow0}y^{-\nu}J_{\nu}(y)=\lim_{y\rightarrow0}y^{-\nu}I_{\nu}(y)=\frac{2^{-\nu}}{\Gamma(\nu+1)}. \label{limiting Bessel}
\end{equation}
\item Chandrasekharan and Narasimhan {[}\cite{arithmetical identities}, p. 19, eq. (65){]} proved yet
another equivalent identity to (\ref{functional equation zetak})
and (\ref{theta k squares formula}). If $x>0$ and $q>\frac{k-1}{2}$,
then
\begin{equation}
\frac{1}{\Gamma(q+1)}\,\sum_{0\leq n\leq x}{}^{^{\prime}}r_{k}(n)\,(x-n)^{q}=\frac{\pi^{\frac{k}{2}}x^{\frac{k}{2}+q}}{\Gamma\left(q+1+\frac{k}{2}\right)}+\pi^{-q}\sum_{n=1}^{\infty}r_{k}(n)\left(\frac{x}{n}\right)^{\frac{k}{4}+\frac{q}{2}}J_{\frac{k}{2}+q}\left(2\pi\sqrt{nx}\right),\label{Bessel expansion riesz sums}
\end{equation}
where the Bessel series on the right-hand side converges absolutely.
The prime on the summation sign indicates that, if $q=0$ and $x$
is an integer, then the last contribution in this Riesz sum is just
$\frac{1}{2}r_{k}(x)$. The appearance of the Bessel functions in
(\ref{Popov intro}) reminds us of (\ref{Bessel expansion riesz sums}).
\end{enumerate}

In \cite{RPOPOV} we have proved two interesting summation formulas that extend the transformation formula for Jacobi's theta function (\ref{jacobi theta intro popov}). These formulas are, respectively, 
\begin{align}
\frac{(\pi y)^{\frac{k}{4}-\frac{1}{2}}}{2^{\frac{k}{4}-\frac{1}{2}}\Gamma\left(\frac{k}{4}+\frac{1}{2}\right)}&+\sum_{n=1}^{\infty}\frac{r_{k}(n)}{n^{\frac{k}{4}-\frac{1}{2}}}\,e^{-\pi nx}J_{\frac{k}{4}-\frac{1}{2}}\left(\pi ny\right)\nonumber \\
=\frac{(\pi y)^{\frac{k}{4}-\frac{1}{2}}}{2^{\frac{k}{4}-\frac{1}{2}}\Gamma\left(\frac{k}{4}+\frac{1}{2}\right)\left(x^{2}+y^{2}\right)^{\frac{k}{4}}}&+\frac{1}{\sqrt{x^{2}+y^{2}}}\,\sum_{n=1}^{\infty}\frac{r_{k}(n)}{n^{\frac{k}{4}-\frac{1}{2}}}\,e^{-\frac{\pi nx}{x^{2}+y^{2}}} J_{\frac{k}{4}-\frac{1}{2}}\left(\frac{\pi ny}{x^{2}+y^{2}}\right)\label{Formula to prove analogue Popov in chapter 6}
\end{align}
and 
\begin{align}
\frac{(\pi y)^{\frac{k}{4}-\frac{1}{2}}}{2^{\frac{k}{4}-\frac{1}{2}}\Gamma\left(\frac{k}{4}+\frac{1}{2}\right)}&+\ensuremath{\sum_{n=1}^{\infty}\frac{r_{k}(n)}{n^{\frac{k}{4}-\frac{1}{2}}}\,e^{-\pi nx}I_{\frac{k}{4}-\frac{1}{2}}\left(\pi ny\right)}\nonumber \\
=\frac{(\pi y)^{\frac{k}{4}-\frac{1}{2}}}{2^{\frac{k}{4}-\frac{1}{2}}\Gamma\left(\frac{k}{4}+\frac{1}{2}\right)\left(x^{2}-y^{2}\right)^{\frac{k}{4}}}&+\frac{1}{\sqrt{x^{2}-y^{2}}}\sum_{n=1}^{\infty}\frac{r_{k}(n)}{n^{\frac{k}{4}-\frac{1}{2}}}\,e^{-\frac{\pi nx}{x^{2}-y^{2}}} I_{\frac{k}{4}-\frac{1}{2}}\left(\frac{\pi ny}{x^{2}-y^{2}}\right).\label{formula for modified bessel in chapter 6}
\end{align}

Before proceeding to the main result of this paper, let us remark that (\ref{Formula to prove analogue Popov in chapter 6}) and (\ref{formula for modified bessel in chapter 6}) contain some interesting analogues and particular cases. First, if we take $k=4$ in (\ref{formula for modified bessel in chapter 6}), we obtain the curious identity
\begin{equation*}
2\pi y+\sum_{n=1}^{\infty}\frac{r_{4}(n)}{n}\,\left\{ e^{-\pi n(x-y)}-e^{-\pi n(x+y)}\right\}=\frac{2\pi y}{x^{2}-y^{2}}+\sum_{n=1}^{\infty}\frac{r_{4}(n)}{n}\,\left\{ e^{-\frac{\pi n}{x+y}}-e^{-\frac{\pi n}{x-y}}\right\},
\end{equation*}
valid for $x>y>0$. 
Also, when $k=1$, it is simple to see that (\ref{Formula to prove analogue Popov in chapter 6}) and (\ref{formula for modified bessel in chapter 6}) imply the beautiful formulas
\begin{equation}
\sum_{n\in\mathbb{Z}}\sqrt{|n|}\,e^{-\pi n^{2}x}J_{-\frac{1}{4}}\left(\pi n^{2}y\right)=\frac{1}{\sqrt{x^{2}+y^{2}}}\sum_{n\in\mathbb{Z}}\sqrt{|n|}\,e^{-\frac{\pi n^{2}x}{x^{2}+y^{2}}}\,J_{-\frac{1}{4}}\left(\frac{\pi n^{2}y}{x^{2}+y^{2}}\right),\label{formula k=00003D1 poppoooov case}
\end{equation}
\begin{equation}
\sum_{n\in\mathbb{Z}}\sqrt{|n|}\,e^{-\pi n^{2}x}I_{-\frac{1}{4}}\left(\pi n^{2}y\right)=\frac{1}{\sqrt{x^{2}-y^{2}}}\sum_{n\in\mathbb{Z}}\sqrt{|n|}\,e^{-\frac{\pi n^{2}x}{x^{2}-y^{2}}}\,I_{-\frac{1}{4}}\left(\frac{\pi n^{2}y}{x^{2}-y^{2}}\right),\label{formuuula k=00003D1 I modified Popov}
\end{equation}
where $x>y>0$. Of course, when $y\rightarrow0^{+}$, (\ref{formula k=00003D1 poppoooov case})
and (\ref{formuuula k=00003D1 I modified Popov}) both reduce to the
theta transformation formula (\ref{jacobi theta intro popov}), by virtue of the limiting relation (\ref{limiting Bessel}).  

Although the proofs of (\ref{Formula to prove analogue Popov in chapter 6}) and (\ref{formula for modified bessel in chapter 6}) employed the interesting symmetries of the Gauss hypergeometric function, $_{2}F_{1}\left(a,b;c;z\right)$, the reader may find their scope a bit limited. First of all, the condition $x>y>0$ seems to be unnecessary for the first formula, (\ref{Formula to prove analogue Popov in chapter 6}), to be valid. However, the proof developed in \cite{RPOPOV} only seems to work under this assumption. 
This particular limitation begs the question whether it is possible to obtain a generalization of (\ref{Formula to prove analogue Popov in chapter 6}) and (\ref{formula for modified bessel in chapter 6}) that lifts the restrictive conditions on $x$ and $y$ imposed by (\ref{Formula to prove analogue Popov in chapter 6}) and (\ref{formula for modified bessel in chapter 6}). 

Since we have already studied summation formulas for indices of Whittaker functions, we thought it would be suitable to complement the results from both \cite{RYPIT} and \cite{RPOPOV} by presenting a generalization of (\ref{Formula to prove analogue Popov in chapter 6}) and (\ref{formula for modified bessel in chapter 6}) involving the Whittaker function of the first kind, $M_{\mu, \nu} (z)$. As it is well known, this function arises in the study of the Kummer
confluent hypergeometric function, $_{1}F_{1}(a;c;z)$, which is usually
defined by the power series [\cite{NIST}, p. 322, eq. (13.2.2)],
\begin{equation}
_{1}F_{1}\left(a;c;z\right):=\sum_{k=0}^{\infty}\frac{(a)_{k}}{(c)_{k}}\,\frac{z^{k}}{k!}, \label{kummer def RYPIT chapter}
\end{equation}
where $(a)_{k}:=\Gamma(a+k)/\Gamma(a)$ denotes the Pochhammer symbol. From the power series (\ref{kummer def RYPIT chapter}), one can define the Whittaker function of the first kind as [\cite{NIST}, p. 334, eq. (13.14.2)]
\begin{equation}
M_{\mu,\nu}(z)=z^{\nu+\frac{1}{2}}e^{-z/2}\,_{1}F_{1}\left(\nu-\mu+\frac{1}{2};\,2\nu+1;\,z\right).\label{NIST def Whittaker first kind}
\end{equation}
This function has several interesting integral representations. One representation that could prove itself useful to be employed in a potential generalization of (\ref{Formula to prove analogue Popov in chapter 6}) and (\ref{formula for modified bessel in chapter 6}) is [\cite{handbook_marichev}, p. 458, eq. (3.30.2.1)] 
\begin{equation}
e^{-ax}M_{\rho,\nu}(2bx)=\left(\frac{2b}{a+b}\right)^{\nu}\,\frac{\sqrt{2bx}}{2\pi i}\,\intop_{\sigma-i\infty}^{\sigma+i\infty}\,\Gamma\left(s+\nu\right)\,_{2}F_{1}\left(\nu+\frac{1}{2}-\rho,s+\nu;2\nu+1;\frac{2b}{a+b}\right)\,\left(x\left(a+b\right)\right)^{-s}\,ds,\label{Marichev at end to apply in the proof}
\end{equation}
as its integrand possesses just the right kind of symmetries, compatible not only with the functional equation of $\zeta_{k}(s)$, but also with Euler's formula for Gauss' hypergeometric function. Applying the same ideas as the ones presented in \cite{RPOPOV}, one can rightfully come to the following result. 
\begin{theorem*} \label{weaker theorem}
Assume that $x,y$ are two real numbers such that $x>y>0$ and that
$-\frac{k}{4}<\rho<\frac{k}{4}$. Then the following summation formula
holds
\begin{align}
(2\pi y)^{\frac{k}{4}}+\sum_{n=1}^{\infty}\frac{r_{k}(n)}{n^{\frac{k}{4}}} & e^{-\pi nx}\,M_{\rho,\frac{k}{4}-\frac{1}{2}}\left(2\pi ny\right)\nonumber \\
=\left(\frac{2\pi y}{x^{2}-y^{2}}\right)^{\frac{k}{4}}\,\left(\frac{x-y}{x+y}\right)^{\rho}+ & \left(\frac{x-y}{x+y}\right)^{\rho}\,\sum_{n=1}^{\infty}\frac{r_{k}(n)}{n^{\frac{k}{4}}}e^{-\frac{\pi nx}{x^{2}-y^{2}}}\,M_{-\rho,\frac{k}{4}-\frac{1}{2}}\left(\frac{2\pi ny}{x^{2}-y^{2}}\right).\label{First summation formula final proof}
\end{align}
\end{theorem*}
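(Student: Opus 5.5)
The plan is to follow the Mellin--Barnes strategy of \cite{RPOPOV}, using (\ref{Marichev at end to apply in the proof}) as the key representation.

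\textbf{Step 1: turn the left side into a contour integral.} Put $a=\pi x$, $b=\pi y$ and $\nu=\frac{k}{4}-\frac12$ in (\ref{Marichev at end to apply in the proof}), with the variable replaced by $n$. Then $x(a+b)$ becomes $\pi n(x+y)$ and $\frac{2b}{a+b}=\frac{2y}{x+y}\in(0,1)$, so the hypergeometric factor is analytic and of moderate growth on vertical lines. Multiply by $r_k(n)n^{-k/4}$ and sum over $n\ge1$. Choose $\sigma$ large, namely $\sigma>\frac{k}{4}+\frac12$ up to the shift by $\nu$, so that the Dirichlet series converges absolutely and the interchange is justified. This gives
\[
\sum_{n\ge1}\frac{r_k(n)}{n^{k/4}}e^{-\pi nx}M_{\rho,\nu}(2\pi ny)=\Big(\tfrac{2y}{x+y}\Big)^{\nu}\frac{\sqrt{2\pi y}}{2\pi i}\int_{(\sigma)}\Gamma(s+\nu)\,{}_2F_1\Big(\tfrac{k}{4}-\rho,s+\nu;\tfrac{k}{2};\tfrac{2y}{x+y}\Big)\,(\pi(x+y))^{-s}\zeta_k\big(s+\tfrac{k}{4}-\tfrac12\big)\,ds.
\]

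\textbf{Step 2: shift the contour to the left.} Substitute $w=s+\nu$, so the integrand contains $\Gamma(w)\zeta_k(w)$. Move the line of integration to $\mathrm{Re}(w)=\frac{k}{2}-c$, where $c$ is large. Two poles are crossed.

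At $w=\frac{k}{2}$ the simple pole of $\zeta_k$ contributes a residue in which the ${}_2F_1$ factor is ${}_2F_1(\frac{k}{4}-\rho,\frac k2;\frac k2;t)=(1-t)^{\rho-k/4}$. With $1-t=\frac{x-y}{x+y}$, this should produce the term $\big(\frac{2\pi y}{x^2-y^2}\big)^{k/4}\big(\frac{x-y}{x+y}\big)^{\rho}$.

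At $w=0$, where $\Gamma$ has its pole, we have $\zeta_k(0)=-1$ and ${}_2F_1=1$. This should give $-(2\pi y)^{k/4}$, which moves to the left side as the term $(2\pi y)^{k/4}$.

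The horizontal segments vanish by Stirling's formula, the convexity bound for $\zeta_k$, and the polynomial growth of ${}_2F_1$ in the parameter $w$ for a fixed argument $t<1$. This growth estimate needs a uniform bound, obtainable for example from the Euler integral representation.

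\textbf{Step 3: apply the functional equation and Euler's transformation.} On the new line, use (\ref{functional equation zetak}) to replace $\Gamma(w)\zeta_k(w)$ by $\pi^{2w-k/2}\Gamma(\frac k2-w)\zeta_k(\frac k2-w)$, and set $w'=\frac k2-w$. Apply Euler's transformation
\[
{}_2F_1(a,b;c;t)=(1-t)^{c-a-b}\,{}_2F_1(c-a,c-b;c;t)
\]
with $a=\frac k4-\rho$, $b=w$, $c=\frac k2$. The result is $(1-t)^{\rho-\frac k4+w'}\,{}_2F_1(\frac k4+\rho,w';\frac k2;t)$, with $\rho$ now sign-flipped to $-\rho$ as required by $M_{-\rho,\nu}$.

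The powers combine as follows. The factor $(\pi(x+y))^{-w}\pi^{2w}\big(\frac{x-y}{x+y}\big)^{w'}$ should collapse to the Marichev kernel $(n\cdot X(A+B))^{-w'}$ with $A=\frac{\pi x}{x^2-y^2}$ and $B=\frac{\pi y}{x^2-y^2}$. Here $\frac{2B}{A+B}=t$ again, and $A+B=\frac{\pi}{x-y}$. Then (\ref{Marichev at end to apply in the proof}), read backwards after expanding $\zeta_k(w')$ as a series, yields the right-hand Whittaker series with the prefactor $\big(\frac{x-y}{x+y}\big)^{\rho}$ and the constant $\big(\frac{1}{x^2-y^2}\big)^{1/2}$ adjustments.

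\textbf{Main obstacle.} The hard part is the bookkeeping of the powers, namely checking that every factor of $\pi$, $x\pm y$, $n$ and $(1-t)$ matches exactly, including the residue term. The other difficult point is justifying the contour shift: this needs uniform growth bounds for ${}_2F_1(\cdot,w;\frac k2;t)$ in $|\mathrm{Im}\,w|$. The hypothesis $|\rho|<\frac k4$ should be exactly what keeps the Euler integral convergent, since it requires $\mathrm{Re}(c-a)>0$ on both sides.
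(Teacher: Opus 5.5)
Your proof is correct, and it is the Mellin--Barnes route that the paper only alludes to for this statement (via (\ref{Marichev at end to apply in the proof}) and the ideas of \cite{RPOPOV}). The bookkeeping you flag does close up.

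\begin{itemize}
\item[(i)] With $w=s+\nu$, the poles of $\Gamma(w)$ at $w=-1,-2,\dots$ are cancelled by the trivial zeros of $\zeta_k$, since the functional equation shows $\Gamma(w)\zeta_k(w)$ is regular there. So only $w=\frac k2$ and $w=0$ contribute. They give exactly $\left(\frac{2\pi y}{x^{2}-y^{2}}\right)^{k/4}\left(\frac{x-y}{x+y}\right)^{\rho}$ and $-(2\pi y)^{k/4}$.
\item[(ii)] On the shifted line, read (\ref{Marichev at end to apply in the proof}) backwards with $A=\frac{\pi x}{x^{2}-y^{2}}$ and $B=\frac{\pi y}{x^{2}-y^{2}}$. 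This produces the factor $\sqrt{2\pi y}/\sqrt{2B}=\sqrt{x^{2}-y^{2}}$, which is cancelled by the powers of $x\pm y$ in $(1-t)^{-k/4}\,(\pi(x+y))^{\nu-\frac k2}\,(A+B)^{-\nu}$.
\item[(iii)] The powers of $\pi$, including the $\pi^{k/2}$ from the functional equation, also cancel, as do the two factors $t^{\pm\nu}$. The prefactor is therefore precisely $\left(\frac{x-y}{x+y}\right)^{\rho}$, with no leftover power of $x^{2}-y^{2}$.
\end{itemize}

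Your use of Euler's integral is simpler than the Watson-type estimates the paper refers to. Since $\frac k4\pm\rho>0$ and $t=\frac{2y}{x+y}\in(0,1)$ is real, the Euler kernel is a probability density on $[0,1]$. Hence $\left|{}_{2}F_{1}\left(\tfrac k4\mp\rho,w;\tfrac k2;t\right)\right|\le\max\{1,(1-t)^{-\mathrm{Re}(w)}\}$, uniformly in $\mathrm{Im}(w)$. Together with Stirling and the polynomial growth of $\zeta_k$ in strips, this handles both interchanges and the horizontal segments.

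The proof the paper actually writes out is different. The statement is the special case $x>y>0$, $\rho\in\mathbb{R}$, of Theorem \ref{generalization whittaker first}, whose proof uses no contour integration:
\begin{itemize}
\item[(a)] $M_{\rho,\frac k4-\frac12}(2\pi ny)$ is expanded via (\ref{First integral representation Whittaker M}), so that $e^{-\pi nx}$ merges with $e^{\pi ny}$ into $e^{-\pi n(x-y)}$.
\item[(b)] After interchanging sum and integral, Popov's formula (\ref{Popov intro}) with $x\mapsto x-y$, $z\mapsto\sqrt{8yt/(x-y)}$ turns the inner $J$-series into an $I$-series plus two elementary terms.
\item[(c)] The elementary terms integrate to the two constant terms. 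The $I$-series is recognized through (\ref{Second Integral Representation Whittaker M}) as the $M_{-\rho}$-series.
\end{itemize}

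Each route has its advantage. Yours makes the structure more visible: the constants are the residues at the two poles of $\Gamma(w)\zeta_k(w)$, and the duality $\rho\mapsto-\rho$ is literally Euler's transformation. The paper's route needs no growth bounds on ${}_2F_1$ in its parameter. It therefore works verbatim for complex $x,y$ with $\mathrm{Re}(x)>|\mathrm{Re}(y)|$ and complex $\rho$, which is what yields the $J$-Bessel formula (\ref{LOL 1}). Your estimates rely on $t$ being real in $(0,1)$, so reaching that range would require redoing the growth analysis.
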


Just like the conditions giving the formulas (\ref{Formula to prove analogue Popov in chapter 6}) and (\ref{formula for modified bessel in chapter 6}), the previous result offers significant obstacles to a wide extension of (\ref{Formula to prove analogue Popov in chapter 6}) and (\ref{formula for modified bessel in chapter 6}). Note that the condition $x>y>0$ is indeed crucial to apply Watson-type estimates for the Gauss hypergeometric function (cf. [\cite{RPOPOV}, Lemma 2.1.]), which are vital to prove (\ref{First summation formula final proof}) using the Mellin-Barnes integral (\ref{Marichev at end to apply in the proof}). However, the condition $x>y>0$ is way too restrictive because, when $\rho =0$, (\ref{First summation formula final proof}) can only be reduced to (\ref{formula for modified bessel in chapter 6}) and not to (\ref{Formula to prove analogue Popov in chapter 6})!
\bigskip{}

Therefore, we need an argument that provides a slightly more general version of (\ref{First summation formula final proof}) and yields, at the same time, both (\ref{Formula to prove analogue Popov in chapter 6}) and (\ref{formula for modified bessel in chapter 6}). 
Finding such an identity requires that we present a drastically different argument from the one used in \cite{RPOPOV}, not needing at all the implementation of the Mellin-Barnes integral (\ref{Marichev at end to apply in the proof}).
What is truly curious about this different proof is that it uses nothing more than Popov's classical formula (\ref{Popov intro}), which served as the main motivation for the investigation of (\ref{Formula to prove analogue Popov in chapter 6}) and (\ref{formula for modified bessel in chapter 6}) in the first place! Without any further delay, we present the main result of this paper. 

\begin{theorem}\label{generalization whittaker first}
Assume that $x,y$ are two complex numbers such that $\text{Re}(x)>|\text{Re}(y)|$ and that $\rho$ is a
complex number satisfying $-\frac{k}{4}<\text{Re}(\rho)<\frac{k}{4}$. Then the following summation formula holds
\begin{align}
(2\pi y)^{\frac{k}{4}}+\sum_{n=1}^{\infty}\frac{r_{k}(n)}{n^{\frac{k}{4}}} & e^{-\pi nx}\,M_{\rho,\frac{k}{4}-\frac{1}{2}}\left(2\pi ny\right)\nonumber \\
=\left(\frac{2\pi y}{x^{2}-y^{2}}\right)^{\frac{k}{4}}\,\left(\frac{x-y}{x+y}\right)^{\rho}+ & \left(\frac{x-y}{x+y}\right)^{\rho}\,\sum_{n=1}^{\infty}\frac{r_{k}(n)}{n^{\frac{k}{4}}}e^{-\frac{\pi nx}{x^{2}-y^{2}}}\,M_{-\rho,\frac{k}{4}-\frac{1}{2}}\left(\frac{2\pi ny}{x^{2}-y^{2}}\right),\label{Formula Formula Formula Whittaker GENERAL}
\end{align}
where $M_{\mu,\nu}(z)$ denotes the Whittaker function of the first
kind (\ref{NIST def Whittaker first kind}).
\end{theorem}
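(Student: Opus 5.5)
The plan is to obtain (\ref{Formula Formula Formula Whittaker GENERAL}) by integrating Popov's formula (\ref{Popov intro}) against a Laplace-type weight in the variable $t=z^{2}$. The tool is the classical Laplace transform
\begin{equation*}
\int_{0}^{\infty}e^{-pt}\,t^{\mu-\frac{1}{2}}\,J_{2\nu}\left(2\beta\sqrt{t}\right)dt=\frac{\Gamma\left(\mu+\nu+\frac{1}{2}\right)}{\beta\,\Gamma(2\nu+1)}\,e^{-\frac{\beta^{2}}{2p}}\,p^{-\mu}\,M_{\mu,\nu}\left(\frac{\beta^{2}}{p}\right),\qquad \text{Re}\left(\mu+\nu+\tfrac{1}{2}\right)>0,
\end{equation*}
together with its analogue for $I_{2\nu}$, obtained by replacing $\beta$ with $i\beta$. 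With $2\nu=\frac{k}{2}-1$, so that $\nu=\frac{k}{4}-\frac{1}{2}$, this is exactly the index appearing in Theorem \ref{generalization whittaker first}.

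Fix $X$ with $\text{Re}(X)>0$ as the variable of (\ref{Popov intro}). Put $z=\sqrt{t}$, multiply both sides by $t^{\alpha}e^{-pt}$ for suitable $\alpha$ (chosen so that the factor $z^{\frac{k}{2}-1}$ in the main terms and the power of $t$ in the Bessel terms combine to $t^{\mu-\frac{1}{2}}$), and integrate over $t\in(0,\infty)$.

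On the left, $e^{t/8}e^{-pt}=e^{-(p-\frac{1}{8})t}$, and each term becomes a multiple of $e^{-\pi nX}e^{-\frac{\pi nX}{8(p-1/8)}}M_{\mu,\nu}\left(\frac{\pi nX}{4(p-1/8)}\right)$. The main term produces a Gamma factor times a power of $p-\frac{1}{8}$. On the right, the effective Laplace parameter is $p+\frac{1}{8}$. The $I$-Bessel terms give $M_{\mu,\nu}$ evaluated at a negative argument, and Kummer's transformation $M_{\mu,\nu}(-z)=\pm i^{2\nu+1}M_{-\mu,\nu}(z)$, with the branch chosen consistently, converts these into $M_{-\mu,\nu}$. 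This is the source of the flip $\rho\mapsto-\rho$.

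Next I will reparametrize. Setting $\mu=\rho$ and matching exponentials and arguments, I will choose $X$ and $p$ so that $X\left(1+\frac{1}{8p-1}\right)=x$ and $\frac{2X}{8p-1}=y$. On the right the corresponding quantities should then become $\frac{x}{x^{2}-y^{2}}$ and $\frac{y}{x^{2}-y^{2}}$. The key check is that $x\pm y$ correspond to $X\cdot\frac{8p\pm1}{8p-1}$, so that $x^{2}-y^{2}$ is a rational expression in $X$ and $p$ reproducing the right side. The power factors $(p\pm\frac{1}{8})^{-\rho}$ should then assemble into $\left(\frac{x-y}{x+y}\right)^{\rho}$, and the main terms into $(2\pi y)^{k/4}$ and $\left(\frac{2\pi y}{x^{2}-y^{2}}\right)^{k/4}\left(\frac{x-y}{x+y}\right)^{\rho}$. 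The Gamma factors $\Gamma\left(\rho+\frac{k}{4}\right)$ appear on the left and $\Gamma\left(-\rho+\frac{k}{4}\right)$ on the right. The hypothesis $|\text{Re}(\rho)|<\frac{k}{4}$ is precisely what makes both Laplace integrals (with $\mu=\rho$ and with $\mu=-\rho$ after Kummer) converge at $t=0$. I expect the Gamma factors to be reconciled by choosing the weight $t^{\alpha}$ appropriately, or by dividing through. This bookkeeping is routine but must be done carefully.

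The main obstacle is justifying the termwise integration and then removing the restrictions. I will first take $X>0$ and $p>\frac{1}{8}$ real, which corresponds to $x>y>0$. The Gaussian damping $e^{-(p\mp 1/8)t}$ in $t=z^{2}$ dominates the growth $|I_{\nu}(\sqrt{\pi n/X}\,z)|\le Ce^{\sqrt{\pi n/X}\,z}$ uniformly in $n$, after combining with $e^{-\pi n/X}$. This gives absolute convergence of $\sum_{n}\int_{0}^{\infty}$, and Fubini applies. Having established the identity on a real open set, I will extend it to all $x,y$ with $\text{Re}(x)>|\text{Re}(y)|$ and all $\rho$ with $|\text{Re}(\rho)|<\frac{k}{4}$ by analytic continuation. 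For this I need locally uniform convergence of both series there. Using $M_{\pm\rho,\nu}(w)=O\left(|w|^{\text{Re}(\nu)+\frac{1}{2}}e^{|\text{Re}(w)|/2}\right)$ (up to polynomial factors), the left terms behave like $e^{-\pi n(\text{Re}(x)-|\text{Re}(y)|)}$. On the right, $\text{Re}\frac{x\pm y}{x^{2}-y^{2}}=\text{Re}\frac{1}{x\mp y}>0$, which gives the analogous decay. The delicate point is the choice of branches of $\left(\frac{x-y}{x+y}\right)^{\rho}$ and $(x^{2}-y^{2})^{-k/4}$ on the domain. I will fix them by continuity from the real region, using that $x\pm y$ stay in the right half-plane.
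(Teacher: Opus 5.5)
Your proposal is correct, and it runs the paper's computation in the opposite direction. The paper starts from the Whittaker series and inserts (\ref{First integral representation Whittaker M}), with kernel $e^{-t}t^{\rho-\frac{1}{2}}$. It then applies Popov's formula (\ref{Popov intro}) with $x\mapsto x-y$ and $z^{2}=8yt/(x-y)$, and reassembles the $I$-series. In your notation this is $X=x-y$ and $p=\frac{x}{8y}$, and your variable $t=z^{2}$ is $8yt/(x-y)$ in terms of the paper's $t$.

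The genuine difference is how complex $x,y$ are reached. The paper computes directly with complex parameters. You instead prove the case $x>y>0$, where every interchange is justified by absolute convergence, and continue analytically over the convex domain $\{\text{Re}(x+y)>0,\ \text{Re}(x-y)>0\}$. Your route is the more robust one. In the paper's computation the separated term $\int_{0}^{\infty}e^{-\frac{x+y}{x-y}t}t^{\rho+\frac{k}{4}-1}\,dt$ converges only if $\text{Re}\frac{x+y}{x-y}=\frac{|x|^{2}-|y|^{2}}{|x-y|^{2}}>0$. This fails at admissible points such as $x=1$, $y=10i$, so some continuation argument is needed there anyway. The price you pay is a locally uniform growth bound for $M_{\pm\rho,\frac{k}{4}-\frac{1}{2}}$ and some branch bookkeeping. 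Besides the branches you mention, you should divide out the common factor $(2\pi y)^{k/4}$, which also sits inside every $M$-term, because $y$ ranges over all of $\mathbb{C}$ on that domain.

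Three details of your bookkeeping need correcting.

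First, matching arguments gives $y=\frac{X}{8p-1}$, not $\frac{2X}{8p-1}$. Only the former yields your $x\pm y=X\frac{8p\pm1}{8p-1}$.

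Second, there is no Gamma mismatch to reconcile. A genuine mismatch could not be fixed by choosing the weight anyway, since both sides are integrated against the same weight. Termwise integration of the power series of $I_{\frac{k}{2}-1}$ gives, for $Q=p+\frac{1}{8}$ and $\gamma>0$, $\int_{0}^{\infty}e^{-Qt}t^{\rho-\frac{1}{2}}I_{\frac{k}{2}-1}(2\gamma\sqrt{t})\,dt=\frac{\Gamma(\rho+\frac{k}{4})}{\gamma\,\Gamma(\frac{k}{2})}Q^{-\rho}e^{\gamma^{2}/(2Q)}M_{-\rho,\frac{k}{4}-\frac{1}{2}}(\gamma^{2}/Q)$. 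This route needs no Kummer transformation or branch choice. Hence both sides carry the same factor $\frac{2\Gamma(\rho+\frac{k}{4})}{\sqrt{\pi}\,\Gamma(\frac{k}{2})}$, times $(p-\frac{1}{8})^{-\rho}$ on the left and $(p+\frac{1}{8})^{-\rho}$ on the right. Since $\frac{8p-1}{8p+1}=\frac{x-y}{x+y}$, this produces exactly the factor $(\frac{x-y}{x+y})^{\rho}$.

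Third, both Laplace integrands behave like $t^{\rho+\frac{k}{4}-1}$ at $0$, so the real case needs only $\text{Re}(\rho)>-\frac{k}{4}$. The upper bound $\text{Re}(\rho)<\frac{k}{4}$ plays a different role. It makes the Euler-type integral of $M_{\kappa,\nu}$ over $[-\frac{1}{2},\frac{1}{2}]$, valid for $|\text{Re}(\kappa)|<\text{Re}(\nu)+\frac{1}{2}$, available for $\kappa=\pm\rho$. That integral gives $|M_{\pm\rho,\frac{k}{4}-\frac{1}{2}}(w)|\le C|w|^{k/4}e^{|\text{Re}(w)|/2}$ locally uniformly in $\rho$, which is precisely the estimate your continuation step requires.
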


To end our introduction, let us remark that it comes as no surprise that we can extend Theorem \ref{generalization whittaker first} to a more general class
of Dirichlet series satisfying Hecke's functional equation. For example,
in \cite{ryce_published}, the author of this paper and Yakubovich proved a generalization of Popov's summation formula, (\ref{Popov intro}), with the role of $r_{k}(n)$ being replaced by a generic arithmetical function, $a(n)$.\footnote{To be completely fair, Berndt [\cite{dirichletserisV}, p. 154, Example 7] already writes this generalized version of Popov's formula as a particular case of an analogue of Vorono\"i's summation formula. Our proof in \cite{ryce_published}, on the other hand, had as its main motivation the study of the critical zeros of some combinations of Dirichlet series. It is, therefore, totally different from Berndt's, as it uses Kummer's formula for the confluent hypergeometric function, $_{1}F_{1}(a;\,c;\,z)$.} To work in this setting, we introduced the following general class of Dirichlet series (cf. [\cite{ryce_published}, Def. 1.1.]).

\begin{definition} \label{class A definition}
Let $\left(\lambda_{n}\right)_{n\in\mathbb{N}}$ and $\left(\mu_{n}\right)_{n\in\mathbb{N}}$
be two sequences of positive numbers strictly increasing to $\infty$
and $\left(a(n)\right)_{n\in\mathbb{N}}$ and $\left(b(n)\right)_{n\in\mathbb{N}}$
two sequences of complex numbers not identically zero. Consider the
functions $\phi(s)$ and $\psi(s)$ representable as Dirichlet series 
\begin{equation}
\phi(s)=\sum_{n=1}^{\infty}\frac{a(n)}{\lambda_{n}^{s}}\,\,\,\,\,\,\text{and }\,\,\,\,\,\psi(s)=\sum_{n=1}^{\infty}\frac{b(n)}{\mu_{n}^{s}}\label{representable as Dirichlet series in first definition ever}
\end{equation}
with finite abscissas of absolute convergence $\sigma_{a}$
and $\sigma_{b}$, respectively. We say that
$\phi(s)$ and $\psi(s)$ satisfy the functional equation \begin{equation}
\Gamma(s)\,\phi(s)=\Gamma(r-s)\,\psi(r-s),\,\,\,\,\,\,r>0,\label{Hecke Dirichlet series Functional}
\end{equation}
if there exists a meromorphic function $\chi(s)$ with the following
properties:
\begin{enumerate}
\item $\chi(s)=\Gamma(s)\,\phi(s)$ for $\text{Re}(s)>\sigma_{a}$
and $\chi(s)=\Gamma(r-s)\,\psi(r-s)$ for $\text{Re}(s)<r-\sigma_{b}$;
\item $\lim_{|\text{Im}(s)|\rightarrow\infty}\chi(s)=0$ uniformly in every
interval $-\infty<\sigma_{1}\leq\text{Re}(s)\leq\sigma_{2}<\infty$. 
\item $\phi(s)$ and $\psi(s)$ have analytic continuations to the entire complex plane and are analytic on $\mathbb{C}$ except
for possible simple poles located at $s=r$ with residues $\rho$
and $\rho^{\star}$, respectively.
\end{enumerate}
\end{definition}

\bigskip{}

\bigskip{}

Under the scope of the previous definition, we have been able to prove the following generalization of Popov's formula (\ref{Popov intro}) [\cite{ryce_published}, p. 38, eq. (2.64)], 
\begin{align}
-\phi(0)e^{z^{2}/8}+2^{r-1}\Gamma(r)\,x^{\frac{1-r}{2}}z^{1-r}\,e^{z^{2}/8}\,\sum_{n=1}^{\infty}a(n)\,\lambda_{n}^{\frac{1-r}{2}}e^{-\lambda_{n}x}\,J_{r-1}\left(\sqrt{\lambda_{n}x}\,z\right)\nonumber\\
=\frac{\rho\Gamma(r)}{x^{r}}e^{-z^{2}/8}+2^{r-1}\Gamma(r)\,z^{1-r}x^{-\frac{r+1}{2}}e^{-z^{2}/8}\,\sum_{n=1}^{\infty}b(n)\,\mu_{n}^{\frac{1-r}{2}}e^{-\frac{\mu_{n}}{x}}\,I_{r-1}\left(\sqrt{\frac{\mu_{n}}{x}}\,z\right),\label{popov ryce paper}
\end{align}
valid for $\text{Re}(x)>0$, $z\in\mathbb{C}$, and for $a(n)$ and $b(n)$ being the arithmetical functions appearing in Definition \ref{class A definition}. 
Since (\ref{popov ryce paper}) generalizes (\ref{Popov intro}), one may ask about the possibility of obtaining a generalization of (\ref{Formula Formula Formula Whittaker GENERAL}) with $r_{k}(n)$ being replaced by a general arithmetical function, $a(n)$. Following the same reasoning as the one present in the proof of our Theorem \ref{generalization whittaker first}, it is possible to obtain such generalization.  
Indeed, if $\phi(s)$ is a Dirichlet series satisfying Definition \ref{class A definition} and $\text{Re}(x)>|\text{Re}(y)|$, $-\frac{r}{2}<\mu<\frac{r}{2}$,
then the following summation formula holds
\begin{align}
-\phi(0)\,\left(2y\right)^{\frac{r}{2}}+\sum_{n=1}^{\infty}&\,a(n)\lambda_{n}^{-\frac{r}{2}}e^{-\lambda_{n}x}\,M_{\mu,\frac{r-1}{2}}\left(2\lambda_{n}y\right)\nonumber \\
=\rho\,\frac{(2y)^{\frac{r}{2}}}{\left(x^{2}+y^{2}\right)^{\frac{r}{2}}}\Gamma(r)+\left(\frac{x-y}{x+y}\right)^{\mu}\,\sum_{n=1}^{\infty}&\,b(n)\,\mu_{n}^{-\frac{r}{2}}\,e^{-\frac{\mu_{n}x}{x^{2}-y^{2}}}M_{-\mu,\frac{r-1}{2}}\left(\frac{2\mu_{n}y}{x^{2}-y^{2}}\right).\label{general summation formula in the Hecke class and Whittaker M}
\end{align}
Note that, as $y\rightarrow0^{+}$ in (\ref{general summation formula in the Hecke class and Whittaker M}), the limiting relation [\cite{NIST}, p. 335, eq. (13.14.14)] $M_{\mu, \nu}(z)=z^{\nu +\frac{1}{2}}\,\left(1+O(z)\right),\,\,\,2\nu \notin \mathbb{Z}^{-},$ $z\rightarrow 0,$ yields
the well-known formula
\begin{equation}
-\phi(0)+\sum_{n=1}^{\infty}a(n)\,e^{-\lambda_{n}x}=\rho\Gamma(r)\,x^{-r}+x^{-r}\,\sum_{n=1}^{\infty}b(n)\,e^{-\frac{\mu_{n}}{x}},\,\,\,\,\,\text{Re}(x)>0,\label{Bochner Popov Paper}
\end{equation}
due to Bochner \cite{bochner_modular_relations}, who proved for the first time the equivalence of (\ref{Bochner Popov Paper})
and the functional equation for $\phi(s)$, (\ref{Hecke Dirichlet series Functional}). Note that (\ref{Bochner Popov Paper}) reduces to the theta transformation formula (\ref{jacobi theta intro popov}) when $\phi(s) = 2\pi^{-s}\,\zeta(2s)$.  

The interested reader can now find the particular summation formulas
arising from the very general identity (\ref{general summation formula in the Hecke class and Whittaker M}). To state a neat identity as an
appetizer for such explorations, one can note that, when $a(n)$ is
Ramanujan's $\tau-$function, $\tau(n)$, then the following formula holds
\[
\sum_{n=1}^{\infty}\frac{\tau(n)}{n^{6}}\,e^{-2\pi nx}M_{\mu,\frac{11}{2}}\left(4\pi ny\right)=\left(\frac{x-y}{x+y}\right)^{\mu}\sum_{n=1}^{\infty}\frac{\tau(n)}{n^{6}}\,e^{-\frac{2\pi nx}{x^{2}-y^{2}}}\,M_{-\mu,\frac{11}{2}}\left(\frac{4\pi ny}{x^{2}-y^{2}}\right),
\]
whenever $-6<\text{Re}(\mu)<6$ and $\text{Re}(x)>|\text{Re}(y)|$.

\section{Proof of Theorem \ref{generalization whittaker first}}

Before giving the proof of Theorem \ref{generalization whittaker first}, we need the following
integral representation for the Whittaker function of the first kind, which can
be found in [\cite{NIST}, p. 337, eq. (13.16.3) and (13.16.4)].
\begin{lemma} \label{Rep Whittaker first kind to prove general popov}
For the confluent hypergeometric function $M_{\rho, \nu}(z)$, the following integral representations hold\footnote{Note that the condition given in \cite{NIST} for the integral (\ref{Second Integral Representation Whittaker M}) contains a typo, which is corrected in the NIST \href{https://dlmf.nist.gov/13.16}{webpage}.}
\begin{equation}
M_{\rho,\nu}(z)=\frac{\Gamma(1+2\nu)\,\sqrt{z}}{\Gamma\left(\frac{1}{2}+\rho+\nu\right)}\,e^{\frac{z}{2}}\,\intop_{0}^{\infty}e^{-t}t^{\rho-\frac{1}{2}}\,J_{2\nu}\left(2\sqrt{zt}\right)\,dt,\,\,\,\,\,\,\text{Re}\left(\rho+\nu\right)+\frac{1}{2}>0,\,\,\,\,z\in\mathbb{C},\label{First integral representation Whittaker M}
\end{equation}
\begin{equation}
M_{\rho,\nu}(z)=\frac{\Gamma(1+2\nu)\,\sqrt{z}}{\Gamma\left(\frac{1}{2}+\rho-\nu\right)}\,e^{-\frac{z}{2}}\,\intop_{0}^{\infty}e^{-t}t^{-\rho-\frac{1}{2}}I_{2\nu}\left(2\sqrt{zt}\right)\,dt,\,\,\,\,\,\,\text{Re}\left(\rho-\nu\right)-\frac{1}{2}<0,\,\,\,\,z\in\mathbb{C}.\label{Second Integral Representation Whittaker M}
\end{equation}
\end{lemma}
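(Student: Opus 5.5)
The plan is to prove both representations in Lemma \ref{Rep Whittaker first kind to prove general popov} the same way. I will expand the Bessel function in its power series, integrate term by term against $e^{-t}t^{\pm\rho-\frac{1}{2}}$ using Euler's integral for $\Gamma$, and recognize the resulting series as a Kummer function $_{1}F_{1}$. For the first formula this $_{1}F_{1}$ must then be converted into the definition (\ref{NIST def Whittaker first kind}) by Kummer's transformation. Throughout I assume $2\nu\notin\mathbb{Z}^{-}$, so that $M_{\rho,\nu}$ and the $\Gamma(1+2\nu)$ factors make sense.

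For (\ref{First integral representation Whittaker M}), I start from
\[
J_{2\nu}\left(2\sqrt{zt}\right)=\sum_{m=0}^{\infty}\frac{(-1)^{m}(zt)^{m+\nu}}{m!\,\Gamma(2\nu+m+1)}.
\]
Integrating term by term and using $\int_{0}^{\infty}e^{-t}t^{\rho+\nu+m-\frac{1}{2}}dt=\Gamma\left(\rho+\nu+\frac{1}{2}+m\right)$, valid exactly when $\text{Re}(\rho+\nu)+\frac{1}{2}>0$, gives
\[
\frac{z^{\nu}\,\Gamma\left(\rho+\nu+\frac{1}{2}\right)}{\Gamma(2\nu+1)}\,{}_{1}F_{1}\left(\rho+\nu+\tfrac{1}{2};\,2\nu+1;\,-z\right).
\]
Multiplying by the prefactor $\Gamma(1+2\nu)\sqrt{z}\,e^{z/2}/\Gamma\left(\frac{1}{2}+\rho+\nu\right)$ leaves $z^{\nu+\frac{1}{2}}e^{z/2}\,{}_{1}F_{1}\left(\rho+\nu+\frac{1}{2};2\nu+1;-z\right)$. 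Kummer's transformation ${}_{1}F_{1}(a;c;-z)=e^{-z}{}_{1}F_{1}(c-a;c;z)$ with $c-a=\nu-\rho+\frac{1}{2}$ turns this into $z^{\nu+\frac{1}{2}}e^{-z/2}\,{}_{1}F_{1}\left(\nu-\rho+\frac{1}{2};2\nu+1;z\right)=M_{\rho,\nu}(z)$.

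For (\ref{Second Integral Representation Whittaker M}), the same computation with
\[
I_{2\nu}\left(2\sqrt{zt}\right)=\sum_{m\geq0}\frac{(zt)^{m+\nu}}{m!\,\Gamma(2\nu+m+1)}
\]
and the weight $t^{-\rho-\frac{1}{2}}$ produces the Gamma factors $\Gamma\left(\nu-\rho+\frac{1}{2}+m\right)$. These integrals converge exactly when $\text{Re}(\rho-\nu)-\frac{1}{2}<0$. The sum becomes $\frac{z^{\nu}\Gamma\left(\nu-\rho+\frac{1}{2}\right)}{\Gamma(2\nu+1)}\,{}_{1}F_{1}\left(\nu-\rho+\frac{1}{2};2\nu+1;z\right)$. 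After multiplying by $\sqrt{z}\,e^{-z/2}$, this is directly $M_{\rho,\nu}(z)$, with no Kummer transformation needed.

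One check is on the normalizing constant in the second formula. The computation shows it should be $\Gamma\left(\frac{1}{2}-\rho+\nu\right)$, which matches the stated convergence condition. I would therefore read the printed $\Gamma\left(\frac{1}{2}+\rho-\nu\right)$ accordingly, consistent with the footnote's remark on corrections to \cite{NIST}.

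The only genuinely delicate step is justifying the interchange of sum and integral; I will use Tonelli/Fubini. Put $a=\text{Re}(\rho+\nu)+\frac{1}{2}$, respectively $a=\text{Re}(\nu-\rho)+\frac{1}{2}$, which is positive in each case. The sum of the integrals of the absolute values of the terms is
\[
\sum_{m}\frac{\Gamma(a+m)\,|z|^{m+\text{Re}\nu}}{m!\,|\Gamma(2\nu+m+1)|}.
\]
The ratio of consecutive terms is $\sim|z|/m$, so this series converges for every $z\in\mathbb{C}$ (it grows like $e^{|z|}$). The lower endpoint $t=0$ is handled by the condition $a>0$, and the upper endpoint by the factor $e^{-t}$, which dominates the Bessel growth $e^{2\sqrt{|zt|}}$. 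Both sides are entire in $z$ (after removing $z^{\nu+\frac{1}{2}}$), so the identity holds for all $z\in\mathbb{C}$.
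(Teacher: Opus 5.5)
Your proof is correct, but it is not what the paper does: the paper gives no derivation of this lemma and simply quotes \cite{NIST}, eqs.~(13.16.3) and (13.16.4). You instead derive both formulas directly from the definition (\ref{NIST def Whittaker first kind}). You integrate the power series of $J_{2\nu}$ and $I_{2\nu}$ term by term via Euler's integral, justified by Tonelli since $\sum_{m}\Gamma(a+m)|z|^{m}/(m!\,|\Gamma(2\nu+1+m)|)<\infty$ for every $z$. For (\ref{First integral representation Whittaker M}) you then apply Kummer's transformation.

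The citation is shorter. Your route is self-contained and shows that the hypotheses $\text{Re}(\rho+\nu)+\frac{1}{2}>0$ and $\text{Re}(\rho-\nu)-\frac{1}{2}<0$ are exactly the conditions for the Gamma integrals to converge at $t=0$. More usefully, it exposes a misprint in the statement. In (\ref{Second Integral Representation Whittaker M}) the normalising factor must be $\Gamma\left(\frac{1}{2}+\nu-\rho\right)$, not $\Gamma\left(\frac{1}{2}+\rho-\nu\right)$. With the printed factor, the admissible choice $\rho=\nu-\frac{1}{2}$ would make the right-hand side vanish, although $M_{\nu-\frac{1}{2},\nu}\not\equiv 0$. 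The corrected form is the one the paper actually uses in the proof of Theorem \ref{generalization whittaker first}. There $\int_{0}^{\infty}e^{-u}u^{\rho-\frac{1}{2}}I_{\frac{k}{2}-1}(\cdots)\,du$ is evaluated with the constant $\Gamma\left(\rho+\frac{k}{4}\right)=\Gamma\left(\frac{1}{2}+\nu-(-\rho)\right)$, $\nu=\frac{k}{4}-\frac{1}{2}$, in front of $M_{-\rho,\frac{k}{4}-\frac{1}{2}}$.

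Three small points about your write-up:
\begin{itemize}
\item The paper's footnote concerns the convergence condition printed in \cite{NIST}, not the Gamma factor. The transposition you found is therefore a separate misprint in the lemma as stated, not something the footnote accounts for.
\item For complex $\nu$ one has $|z^{\nu+m}|=|z|^{m+\text{Re}(\nu)}e^{-\text{Im}(\nu)\arg z}$. This only changes a harmless constant in your dominating series.
\item The closing appeal to analytic continuation is unnecessary, since the termwise computation is already valid for every $z$.
\end{itemize}
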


This proof's point of departure is the first integral representation (\ref{First integral representation Whittaker M}),
as well as the ubiquitous Popov's formula (\ref{Popov intro}). As remarked above, the advantage of the forthcoming proof is that we no longer need to invoke Watson type
estimates for the Gauss hypergeometric function,
nor the somewhat restrictive conditions that these entail, i.e., the
condition that $x>y>0$. Throughout our next argument we shall assume that $x$ and $y$ are complex
numbers such that $\text{Re}(x)>|\text{Re}(y)|$ and also that $-\frac{k}{4}<\text{Re}(\rho)<\frac{k}{4}$. Under these hypotheses, (\ref{First integral representation Whittaker M}) yields
\begin{align}
\sum_{n=1}^{\infty}\frac{r_{k}(n)}{n^{\frac{k}{4}}}e^{-\pi nx}\,M_{\rho,\frac{k}{4}-\frac{1}{2}}\left(2\pi ny\right) & =\frac{\Gamma\left(\frac{k}{2}\right)\sqrt{2\pi y}}{\Gamma\left(\frac{k}{4}+\rho\right)}\,\sum_{n=1}^{\infty}\frac{r_{k}(n)}{n^{\frac{k}{4}-\frac{1}{2}}}e^{-\pi n\left(x-y\right)}\,\intop_{0}^{\infty}e^{-t}t^{\rho-\frac{1}{2}}J_{\frac{k}{2}-1}\left(2\sqrt{2\pi ny\,t}\right)\,dt\nonumber \\
 & =\frac{\Gamma\left(\frac{k}{2}\right)\sqrt{2\pi y}}{\Gamma\left(\frac{k}{4}+\rho\right)}\,\intop_{0}^{\infty}e^{-t}t^{\rho-\frac{1}{2}}\,\sum_{n=1}^{\infty}\frac{r_{k}(n)}{n^{\frac{k}{4}-\frac{1}{2}}}e^{-\pi n\left(x-y\right)}J_{\frac{k}{2}-1}\left(2\sqrt{2\pi ny\,t}\right)\,dt,\label{Right hand side with Popov to APPPLY}
\end{align}
where the interchange of the integration and summation orders is due
to the simple inequalities
\begin{align}
\sum_{n=1}^{\infty}\frac{r_{k}(n)}{n^{\frac{k}{4}-\frac{1}{2}}}e^{-\pi n\text{Re}(x-y)}\,\intop_{0}^{\infty}e^{-t}t^{\text{Re}(\rho)-\frac{1}{2}}\left|J_{\frac{k}{2}-1}\left(2\sqrt{2\pi ny\,t}\right)\right|\,dt\nonumber \\
\leq\sum_{n=1}^{\infty}\frac{r_{k}(n)}{n^{\frac{k}{4}-\frac{1}{2}}}e^{-\pi n\text{Re}(x-y)}\,\intop_{0}^{\infty}e^{-t}t^{\text{Re}(\rho)-\frac{1}{2}}I_{\frac{k}{2}-1}\left(2\sqrt{2\pi n|y|t}\right)\,dt\nonumber \\
\leq\frac{\left(2\pi|y|\right)^{\frac{k}{4}-\frac{1}{2}}}{\Gamma\left(\frac{k}{2}\right)}\,\sum_{n=1}^{\infty}r_{k}(n)e^{-\pi n\text{Re}(x-y)}\,\intop_{0}^{\infty}e^{-\left(t-2\sqrt{2\pi n|y|t}\right)}t^{\text{Re}(\rho)+\frac{k}{4}-1}\,dt\nonumber \\
=\frac{2\left(2\pi|y|\right)^{\frac{k}{4}-\frac{1}{2}}}{\Gamma\left(\frac{k}{2}\right)}\,\sum_{n=1}^{\infty}r_{k}(n)e^{-\pi n\text{Re}(x-y)}\,\intop_{0}^{\infty}e^{-\left(u^{2}-2\sqrt{2\pi n|y|}\,u\right)}u^{2\text{Re}(\rho)+\frac{k}{2}-1}\,du\nonumber \\
=\frac{2\left(2\pi|y|\right)^{\frac{k}{4}-\frac{1}{2}}}{\Gamma\left(\frac{k}{2}\right)}\frac{\Gamma\left(2\text{Re}(\rho)+\frac{k}{2}\right)}{2^{\text{Re}(\rho)+\frac{k}{4}-\frac{1}{2}}}\,\sum_{n=1}^{\infty}r_{k}(n)e^{-\pi n\text{Re}(x-y)}\,e^{\pi n|y|}\,D_{-2\text{Re}(\rho)-\frac{k}{2}}\left(2\sqrt{\pi n|y|}\right),\label{interchange second proof POPOV}
\end{align}
where the second inequality is due to the integral representation
for the modified Bessel function {[}\cite{NIST}, p.252, eq. (10.32.2){]},
\begin{equation}
\left(\frac{z}{2}\right)^{-\nu}I_{\nu}(z)=\frac{1}{\sqrt{\pi}\Gamma\left(\nu+\frac{1}{2}\right)}\,\intop_{-1}^{1}\left(1-t^{2}\right)^{\nu-\frac{1}{2}}\,e^{zt}dt,\,\,\,\,\,\text{Re}(\nu)>-\frac{1}{2},\,\,\,z\in\mathbb{C},\label{analogue Poisson integral}
\end{equation}
from which one can immediately obtain the bound (with real $\nu>-\frac{1}{2}$
and $z\in\mathbb{C}$)\footnote{despite the fact that the Poisson integral (\ref{analogue Poisson integral}) holds for $\text{Re}(\nu)>-\frac{1}{2}$, which covers the case where $k\geq 2$, when $\nu = -\frac{1}{2}$ ($k=1$), the identity $I_{-\frac{1}{2}}(x)=\sqrt{\frac{2}{\pi x}}\cosh(x)$ actually serves the same purpose.}
\begin{equation}
\left|\left(\frac{z}{2}\right)^{-\nu}I_{\nu}(z)\right|\leq\frac{e^{|\text{Re}(z)|}}{\Gamma(\nu+1)}.\label{Bound Bessel}
\end{equation}
Moreover, the last step of (\ref{interchange second proof POPOV}) can be justified by the integral representation for the parabolic cylinder function {[}\cite{handbook_marichev},
p. 20, 2.2.1.6{]} (see also relation 2.3.15.3 on page 343 of vol. I of \cite{MARICHEV}),
\begin{equation}
\intop_{0}^{\infty}u^{\mu-1}e^{-xu^{2}-yu}du=\frac{\Gamma(\mu)}{(2x)^{\mu/2}}\,e^{\frac{y^{2}}{8x}}\,D_{-\mu}\left(\frac{y}{\sqrt{2x}}\right),\,\,\,\,\text{Re}(\mu),\,\,\text{Re}(x)>0,\,\,y\in\mathbb{C},\label{integral cylinder useful final corollary}
\end{equation}
which can be applied because $\text{Re}(\rho)>-\frac{k}{4}$ by hypothesis.
The last series in (\ref{interchange second proof POPOV}) converges absolutely, as 
 $D_{\mu}(x)$ satisfies the asymptotic
formula (cf. {[}\cite{NIST}, p. 309, 12.9 (i){]}, {[}\cite{ERDELIY_TRANSCENDENTAL}, Vol. II, p. 122, eq. 8.4(1){]}), 
\begin{equation}
D_{\mu}(x)\sim x^{\mu}e^{-\frac{x^{2}}{4}},\,\,\,\,x\rightarrow\infty.\label{Whittaker asymptotic formula}
\end{equation} 
Looking now at the
right-hand side of (\ref{Right hand side with Popov to APPPLY}),
we are able to apply Popov's formula (\ref{Popov intro}) to the infinite series
\[
\sum_{n=1}^{\infty}\frac{r_{k}(n)}{n^{\frac{k}{4}-\frac{1}{2}}}e^{-\pi n\left(x-y\right)}J_{\frac{k}{2}-1}\left(2\sqrt{2\pi ny\,t}\right)
\]
because, by hypothesis, $\text{Re}(x)>|\text{Re}(y)|$. Employing (\ref{Popov intro})
and replacing there $x$ by $x-y$ and $z$ by $\sqrt{\frac{8yt}{x-y}}$,
we obtain the equivalent identity
\begin{align*}
\sum_{n=1}^{\infty}\frac{r_{k}(n)}{n^{\frac{k}{4}-\frac{1}{2}}}e^{-\pi n\left(x-y\right)}J_{\frac{k}{2}-1}\left(2\sqrt{2\pi ny\,t}\right) =\frac{2^{\frac{k}{4}-\frac{1}{2}}\pi^{\frac{k}{4}-\frac{1}{2}}y^{\frac{k}{4}-\frac{1}{2}}}{\Gamma\left(\frac{k}{2}\right)\left(x-y\right)^{\frac{k}{2}}}\,t^{\frac{k}{4}-\frac{1}{2}}e^{-\frac{2yt}{x-y}}&\\
-\frac{\pi^{\frac{k}{4}-\frac{1}{2}}2^{\frac{k}{4}-\frac{1}{2}}y^{\frac{k}{4}-\frac{1}{2}}}{\Gamma\left(\frac{k}{2}\right)}\,t^{\frac{k}{4}-\frac{1}{2}}  +\frac{e^{-\frac{2yt}{x-y}}}{x-y}\,\sum_{n=1}^{\infty}\frac{r_{k}(n)}{n^{\frac{k}{4}-\frac{1}{2}}}\,e^{-\frac{\pi n}{x-y}}\,I_{\frac{k}{2}-1}\left(\frac{\sqrt{8\pi nyt}}{x-y}\right).&
\end{align*}
This sibling of Popov's formula (\ref{Popov intro}) now yields the equality
\begin{align*}
&\frac{\Gamma\left(\frac{k}{2}\right)\sqrt{2\pi y}}{\Gamma\left(\frac{k}{4}+\rho\right)}\,\intop_{0}^{\infty}e^{-t}t^{\rho-\frac{1}{2}}\,\sum_{n=1}^{\infty}\frac{r_{k}(n)}{n^{\frac{k}{4}-\frac{1}{2}}}e^{-\pi n\left(x-y\right)}J_{\frac{k}{2}-1}\left(2\sqrt{2\pi ny\,t}\right)\,dt\\
=&\frac{\left(2\pi y\right)^{\frac{k}{4}}}{\Gamma\left(\frac{k}{4}+\rho\right)}\,\frac{1}{\left(x-y\right)^{\frac{k}{2}}}\,\intop_{0}^{\infty}e^{-\frac{x+y}{x-y}\,t}t^{\rho+\frac{k}{4}-1}\,dt-\frac{\left(2\pi y\right)^{\frac{k}{4}}}{\Gamma\left(\frac{k}{4}+\rho\right)}\,\intop_{0}^{\infty}\,t^{\rho+\frac{k}{4}-1}e^{-t}\,dt\\
+&\frac{\Gamma\left(\frac{k}{2}\right)\sqrt{2\pi y}}{\Gamma\left(\frac{k}{4}+\rho\right)\left(x-y\right)}\,\intop_{0}^{\infty}e^{-\frac{x+y}{x-y}\,t}t^{\rho-\frac{1}{2}}\,\sum_{n=1}^{\infty}\frac{r_{k}(n)}{n^{\frac{k}{4}-\frac{1}{2}}}\,e^{-\frac{\pi n}{x-y}}\,I_{\frac{k}{2}-1}\left(\frac{\sqrt{8\pi nyt}}{x-y}\right)\,dt\\
=&\,\frac{\left(2\pi y\right)^{\frac{k}{4}}\left(x-y\right)^{\rho-\frac{k}{4}}}{\left(x+y\right)^{\rho+\frac{k}{4}}}-\left(2\pi y\right)^{\frac{k}{4}}\\
+&\frac{\Gamma\left(\frac{k}{2}\right)\sqrt{2\pi y}}{\Gamma\left(\frac{k}{4}+\rho\right)\left(x-y\right)}\left(\frac{x-y}{x+y}\right)^{\rho+\frac{1}{2}}\sum_{n=1}^{\infty}\frac{r_{k}(n)}{n^{\frac{k}{4}-\frac{1}{2}}}e^{-\frac{\pi n}{x-y}}\,\intop_{0}^{\infty}e^{-u}\,u^{\rho-\frac{1}{2}}\,I_{\frac{k}{2}-1}\left(\frac{2\sqrt{2\pi nyu}}{\sqrt{x^{2}-y^{2}}}\right)\,du,
\end{align*}
where the evaluation of the first two terms in the last equality can be truly justified by the hypothesis
$-\frac{k}{4}<\text{Re}(\rho)<\frac{k}{4}$. The reason for the interchange of the orders of
summation and integration in the last step of the previous equalities is analogous to that indicated in (\ref{interchange second proof POPOV}).
Indeed,
\begin{align*}
&\sum_{n=1}^{\infty}\frac{r_{k}(n)}{n^{\frac{k}{4}-\frac{1}{2}}}e^{-\pi n\,\text{Re}\left(\frac{1}{x-y}\right)}\,\intop_{0}^{\infty}e^{-u}\,u^{\text{Re}(\rho)-\frac{1}{2}}\,\left|I_{\frac{k}{2}-1}\left(\frac{2\sqrt{2\pi nyu}}{\sqrt{x^{2}-y^{2}}}\right)\right|\,du\\
&\leq \frac{\left(2\pi\right)^{\frac{k}{4}-\frac{1}{2}}}{\Gamma\left(\frac{k}{2}\right)}\,\left|\frac{y}{x^{2}-y^{2}}\right|^{\frac{k}{4}-\frac{1}{2}}\sum_{n=1}^{\infty}r_{k}(n)\,e^{-\pi n\,\text{Re}\left(\frac{1}{x-y}\right)}\,\intop_{0}^{\infty}\,u^{\text{Re}(\rho)+\frac{k}{4}-1}e^{-u}\,e^{2\sqrt{2\pi n}\,\left|\text{Re}\left(\sqrt{\frac{y}{x^{2}-y^{2}}}\right)\right|\,\sqrt{u}}\,du\\
&=\frac{2\left(2\pi\right)^{\frac{k}{4}-\frac{1}{2}}}{\Gamma\left(\frac{k}{2}\right)}\,\left|\frac{y}{x^{2}-y^{2}}\right|^{\frac{k}{4}-\frac{1}{2}}\sum_{n=1}^{\infty}r_{k}(n)\,e^{-\pi n\,\text{Re}\left(\frac{1}{x-y}\right)}\,\intop_{0}^{\infty}\,t^{2\text{Re}(\rho)+\frac{k}{2}-1}e^{-t^{2}}\,e^{2\sqrt{2\pi n}\,\left|\text{Re}\left(\sqrt{\frac{y}{x^{2}-y^{2}}}\right)\right|\,t}\,dt\\
&=\frac{2\left(2\pi\right)^{\frac{k}{4}-\frac{1}{2}}\,\Gamma\left(-2\text{Re}(\rho)-\frac{k}{2}\right)}{\Gamma\left(\frac{k}{2}\right)\,2^{\text{Re}(\rho)+\frac{k}{4}}}\,\left|\frac{y}{x^{2}-y^{2}}\right|^{\frac{k}{4}-\frac{1}{2}}\sum_{n=1}^{\infty}r_{k}(n)\,\exp\left(-\pi n\,\text{Re}\left(\frac{1}{x-y}\right)+ \pi n\left|\text{Re}\left(\sqrt{\frac{y}{x^{2}-y^{2}}}\right)\right|^{2}\right)\\
&\,\,\,\,\,\,\,\,\,\,\,\,\,\,\,\,\,\,\,\,\,\,\,\,\,\,\,\,\,\,\,\,\,\,\,\,\,\,\,\,\,\,\,\,\,\,\,\,\,\,\,\,\,\,\,\,\,\,\,\,\,\,\,\,\,\,\,\,\,\,\,\,\,\,\,\,\,\,\,\,\,\,\,\,\,\,\,\,\,\,\,\,\,\,\,\,\,\,\,\,\,\,\,\,\,\,\,\,\,\,\,\,\,\,\,\,\,\,\,\,\,\,\,\,\,\,\,\times D_{-2\text{Re}(\rho)-\frac{k}{2}}\left(2\sqrt{\pi n}\,\,\left|\text{Re}\left(\sqrt{\frac{y}{x^{2}-y^{2}}}\right)\right|\right),
\end{align*}
which converges absolutely by the asymptotic formula (\ref{Whittaker asymptotic formula}) for the cylinder
function $D_{\nu}(x)$. On the other hand,
using the hypothesis that $-\frac{k}{4}<\text{Re}(\rho)<\frac{k}{4}$, we see from (\ref{Second Integral Representation Whittaker M})
that
\[
\intop_{0}^{\infty}e^{-u}\,u^{\rho-\frac{1}{2}}\,I_{\frac{k}{2}-1}\left(2\sqrt{\frac{2\pi nyu}{x^{2}-y^{2}}}\right)\,du=\frac{\Gamma\left(\rho+\frac{k}{4}\right)}{\Gamma\left(\frac{k}{2}\right)}\sqrt{\frac{x^{2}-y^{2}}{2\pi ny}}e^{\frac{\pi ny}{x^{2}-y^{2}}}\,M_{-\rho,\frac{k}{4}-\frac{1}{2}}\left(\frac{2\pi ny}{x^{2}-y^{2}}\right),
\]
yielding the identity
\begin{align}
\sum_{n=1}^{\infty}\frac{r_{k}(n)}{n^{\frac{k}{4}}}e^{-\pi nx}\,M_{\rho,\frac{k}{4}-\frac{1}{2}}\left(2\pi ny\right) & =\left(\frac{2\pi y}{x^{2}-y^{2}}\right)^{\frac{k}{4}}\,\left(\frac{x-y}{x+y}\right)^{\rho}-\left(2\pi y\right)^{\frac{k}{4}}\nonumber \\
+\left(\frac{x-y}{x+y}\right)^{\rho} & \sum_{n=1}^{\infty}\frac{r_{k}(n)}{n^{\frac{k}{4}}}\,e^{-\frac{\pi nx}{x^{2}-y^{2}}}\,M_{-\rho,\frac{k}{4}-\frac{1}{2}}\left(\frac{2\pi ny}{x^{2}-y^{2}}\right),\label{equation almost at the end Popov general}
\end{align}
which is precisely (\ref{Formula Formula Formula Whittaker GENERAL}), the desired summation formula. $\blacksquare$

\bigskip{}

\bigskip{}

As promised at the beginning of our paper, we can now derive a much stronger version of formulas (\ref{Formula to prove analogue Popov in chapter 6}) and (\ref{formula for modified bessel in chapter 6}), totally erasing its restrictive condition on $x$ and $y$. 
\begin{corollary}
If $x,y$ are two complex numbers such that $\text{Re}(x)>|\text{Im}(y)|$,
then the following transformation formula holds
\begin{align}
\frac{(\pi y)^{\frac{k}{4}-\frac{1}{2}}}{2^{\frac{k}{4}-\frac{1}{2}}\Gamma\left(\frac{k}{4}+\frac{1}{2}\right)}+\sum_{n=1}^{\infty}\frac{r_{k}(n)}{n^{\frac{k}{4}-\frac{1}{2}}}\,e^{-\pi nx}J_{\frac{k}{4}-\frac{1}{2}}\left(\pi ny\right)\nonumber \\
=\frac{(\pi y)^{\frac{k}{4}-\frac{1}{2}}}{2^{\frac{k}{4}-\frac{1}{2}}\Gamma\left(\frac{k}{4}+\frac{1}{2}\right)\left(x^{2}+y^{2}\right)^{\frac{k}{4}}}+\frac{1}{\sqrt{x^{2}+y^{2}}}\,\sum_{n=1}^{\infty}\frac{r_{k}(n)}{n^{\frac{k}{4}-\frac{1}{2}}}\,e^{-\frac{\pi nx}{x^{2}+y^{2}}} & J_{\frac{k}{4}-\frac{1}{2}}\left(\frac{\pi ny}{x^{2}+y^{2}}\right).\label{LOL 1}
\end{align}
Moreover, if $x,y$ are two complex numbers such that $\text{Re}(x)>|\text{Re}(y)|$, the analogous formula is valid
\begin{align}
\frac{(\pi y)^{\frac{k}{4}-\frac{1}{2}}}{2^{\frac{k}{4}-\frac{1}{2}}\Gamma\left(\frac{k}{4}+\frac{1}{2}\right)}+{\sum_{n=1}^{\infty}\frac{r_{k}(n)}{n^{\frac{k}{4}-\frac{1}{2}}}\,e^{-\pi nx}I_{\frac{k}{4}-\frac{1}{2}}\left(\pi ny\right)}\nonumber \\
=\frac{(\pi y)^{\frac{k}{4}-\frac{1}{2}}}{2^{\frac{k}{4}-\frac{1}{2}}\Gamma\left(\frac{k}{4}+\frac{1}{2}\right)\left(x^{2}-y^{2}\right)^{\frac{k}{4}}}+\frac{1}{\sqrt{x^{2}-y^{2}}}\sum_{n=1}^{\infty}\frac{r_{k}(n)}{n^{\frac{k}{4}-\frac{1}{2}}}\,e^{-\frac{\pi nx}{x^{2}-y^{2}}} & I_{\frac{k}{4}-\frac{1}{2}}\left(\frac{\pi ny}{x^{2}-y^{2}}\right).\label{Lol2}
\end{align}
\end{corollary}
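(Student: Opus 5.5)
The plan is to obtain both formulas from Theorem \ref{generalization whittaker first} with $\rho=0$; this is allowed because $-\frac{k}{4}<0<\frac{k}{4}$. The factor $\left(\frac{x-y}{x+y}\right)^{\rho}$ then disappears. The key input is the classical reduction of $M_{0,\nu}$ to a modified Bessel function [\cite{NIST}, eq. (13.18.8)]:
\[
M_{0,\nu}(2z)=2^{2\nu}\,\Gamma(\nu+1)\,\sqrt{2z}\,I_{\nu}(z).
\]
We take $\nu=\frac{k}{4}-\frac{1}{2}$, so that $2\nu+1=\frac{k}{2}$ and $\Gamma(\nu+1)=\Gamma\left(\frac{k}{4}+\frac{1}{2}\right)$. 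One can check this relation directly from (\ref{NIST def Whittaker first kind}) and Kummer's second theorem, or from (\ref{Second Integral Representation Whittaker M}) with $\rho=0$.

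For (\ref{Lol2}), I substitute this into (\ref{Formula Formula Formula Whittaker GENERAL}) with $z=\pi ny$ on the left and $z=\pi ny/(x^{2}-y^{2})$ on the right. I then divide the whole identity by $C\sqrt{2\pi y}$, where $C=2^{\frac{k}{2}-1}\Gamma\left(\frac{k}{4}+\frac{1}{2}\right)$. The bookkeeping goes as follows.
\begin{itemize}
\item On the left, each term $n^{-k/4}\sqrt{2\pi n y}$ becomes $n^{-(\frac{k}{4}-\frac{1}{2})}$.
\item The constant term becomes $(2\pi y)^{\frac{k}{4}-\frac{1}{2}}/C=(\pi y)^{\frac{k}{4}-\frac{1}{2}}/\bigl(2^{\frac{k}{4}-\frac{1}{2}}\Gamma(\frac{k}{4}+\frac{1}{2})\bigr)$.
\item On the right, the same computation produces the extra factors $(x^{2}-y^{2})^{-k/4}$ and $(x^{2}-y^{2})^{-1/2}$.
\end{itemize}
This gives (\ref{Lol2}) under the hypothesis $\text{Re}(x)>|\text{Re}(y)|$ of Theorem \ref{generalization whittaker first}.

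For (\ref{LOL 1}), I replace $y$ by $iy$ in (\ref{Lol2}) and use $I_{\nu}(iw)=i^{\nu}J_{\nu}(w)$. The hypothesis $\text{Re}(x)>|\text{Re}(iy)|$ becomes exactly $\text{Re}(x)>|\text{Im}(y)|$, and $x^{2}-(iy)^{2}=x^{2}+y^{2}$. The factor $(i\pi y)^{\frac{k}{4}-\frac{1}{2}}=i^{\nu}(\pi y)^{\nu}$ in each constant term matches the factor $i^{\nu}$ coming from the Bessel functions, so dividing by $i^{\nu}$ yields (\ref{LOL 1}).

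The main obstacle is branch consistency. The powers $y^{\nu}$, $\sqrt{y}$, $(x^{2}\mp y^{2})^{k/4}$ and $\sqrt{x^{2}\mp y^{2}}$ have to be read with compatible branches for complex $x,y$, and the cancellations above are only immediate for $x>|y|$, $y>0$. To avoid this, I will first rewrite both identities in terms of the entire functions $w^{-\nu}I_{\nu}(w)$ and $w^{-\nu}J_{\nu}(w)$, after multiplying through by $y^{-\nu}$. Then the only remaining multivalued factor is $(x^{2}\mp y^{2})^{-k/4}$ (together with the combination $(x^{2}\mp y^{2})^{-\nu-1/2}$ in the series), taken with the principal branch.

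Next I check that each side is analytic in $(x,y)$ on the relevant region. The domains are convex, so they are connected, and on them $\text{Re}(x^{2}\mp y^{2})$ and $\text{Re}\frac{x}{x^{2}\mp y^{2}}$ stay away from the branch cut. Local uniform convergence of the series follows from the bound (\ref{Bound Bessel}) together with $\text{Re}\,\frac{x\pm y}{x^{2}-y^{2}}>0$, by the same kind of estimate as in the interchange (\ref{interchange second proof POPOV}). The identities, verified for real $x>y>0$, then extend to the full stated regions by the identity theorem.
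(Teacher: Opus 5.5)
Your proposal is correct, and its core is the paper's own proof. You set $\rho=0$ in Theorem~\ref{generalization whittaker first}, apply $M_{0,\nu}(2z)=2^{2\nu+\frac12}\Gamma(\nu+1)\sqrt{z}\,I_\nu(z)$ with $\nu=\frac k4-\frac12$ to reach (\ref{Lol2}), and substitute $y\mapsto iy$ for (\ref{LOL 1}); your bookkeeping of constants checks out.

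What you add is the treatment of branches. The paper substitutes into the complex-parameter theorem and says nothing about branches. You instead normalize by the entire functions $w^{-\nu}I_\nu(w)$ and $w^{-\nu}J_\nu(w)$, isolate the principal power $(x^2\mp y^2)^{-k/4}$, and continue analytically from real $x>y>0$. This step is needed. With principal branches everywhere, the power $\bigl(\frac{\pi ny}{x^2-y^2}\bigr)^{\nu}$ inside $I_\nu$ on the right of (\ref{Lol2}) can differ from $(\pi ny)^{\nu}(x^2-y^2)^{-\nu}$ by the factor $e^{2\pi i\nu}$, for instance at $x=1+5i$, $y=-0.9-0.1i$. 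So the printed formula does require the compatible reading you give.

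A by-product of your route is that it only uses the identity for real $x>y>0$, which is already the old formula (\ref{formula for modified bessel in chapter 6}). The corollary therefore follows from that formula by analytic continuation alone, whereas the paper obtains the complex range directly from its new proof of the theorem.

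Two small corrections.

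First, $\text{Re}(x^2\mp y^2)$ need not be positive on these domains (take $x=1+10i$, $y=0$). The right reason the principal branch is holomorphic there is a factorization: $x^2-y^2=(x-y)(x+y)$ and $x^2+y^2=(x+iy)(x-iy)$ are each products of two numbers in the open right half-plane. Hence they never lie in $(-\infty,0]$. The same argument gives $(x^2-y^2)^{-k/4}=(x-y)^{-k/4}(x+y)^{-k/4}$, which matches the term produced in the proof of the theorem.

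Second, checking the reduction formula through (\ref{Second Integral Representation Whittaker M}) as printed would give the wrong constant. Its Gamma factor should read $\Gamma(\frac12+\nu-\rho)$, which is the form actually used in the proof of the theorem. Kummer's second theorem or [\cite{NIST}, eq. (13.18.8)] is the safe route.
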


\begin{proof}
Clearly, (\ref{Lol2}) implies (\ref{LOL 1}) under the substitution
$y\leftrightarrow iy$. Since the conditions over $x$ and $y$ in
the formula (\ref{Lol2}) are actually the same as the conditions
in our general Theorem \ref{generalization whittaker first}, we just need to check that (\ref{Formula Formula Formula Whittaker GENERAL})
reduces to (\ref{Lol2}). Indeed, if we take $\rho=0$ in (\ref{Formula Formula Formula Whittaker GENERAL})
and use the reduction formula [\cite{NIST}, p. 338, eq. (13.18.8)], 
\[
M_{0,\nu}(2z)=2^{2\nu+\frac{1}{2}}\Gamma(\nu+1)\,\sqrt{z}\,I_{\nu}(z),
\]
we can easily get (\ref{Lol2}).

\end{proof}

\begin{remark}
We can generalize the previous corollary for any complex numbers $x,y$ such that $\text{Re}(x)>|\text{Im}(y)|$. In fact, such generalization reads
\begin{align}
-\frac{\phi(0)}{\Gamma\left(\frac{r+1}{2}\right)}\left(\frac{y}{2}\right)^{\frac{r-1}{2}}+&\sum_{n=1}^{\infty}a(n)\,\lambda_{n}^{\frac{1-r}{2}}e^{-\lambda_{n}x}J_{\frac{r-1}{2}}\left(\lambda_{n}y\right)\nonumber \\
=\frac{\rho\Gamma\left(\frac{r}{2}\right)}{\sqrt{\pi}}\,\frac{(2y)^{\frac{r-1}{2}}}{\left(x^{2}+y^{2}\right)^{\frac{r}{2}}}+\frac{1}{\sqrt{x^{2}+y^{2}}}\,&\sum_{n=1}^{\infty}b(n)\,\mu_{n}^{\frac{1-r}{2}}\exp\left\{ -\frac{\mu_{n}x}{x^{2}+y^{2}}\right\} J_{\frac{r-1}{2}}\left(\frac{\mu_{n}y}{x^{2}+y^{2}}\right).\label{General analogue for any HECKE!}
\end{align}
Analogously, under the condition that $\text{Re}(x)>|\text{Re}(y)|$, one can find the generalization of (\ref{Lol2}), 
\begin{align}
-\frac{\phi(0)}{\Gamma\left(\frac{r+1}{2}\right)}\left(\frac{y}{2}\right)^{\frac{r-1}{2}}+&\sum_{n=1}^{\infty}a(n)\,\lambda_{n}^{\frac{1-r}{2}}e^{-\lambda_{n}x}I_{\frac{r-1}{2}}\left(\lambda_{n}y\right)\nonumber \\
=\frac{\rho\Gamma\left(\frac{r}{2}\right)}{\sqrt{\pi}}\,\frac{(2y)^{\frac{r-1}{2}}}{\left(x^{2}-y^{2}\right)^{\frac{r}{2}}}+\frac{1}{\sqrt{x^{2}-y^{2}}}\,&\sum_{n=1}^{\infty}b(n)\,\mu_{n}^{\frac{1-r}{2}}\exp\left\{ -\frac{\mu_{n}x}{x^{2}-y^{2}}\right\} I_{\frac{r-1}{2}}\left(\frac{\mu_{n}y}{x^{2}-y^{2}}\right).\label{General analogue for any HECKE!-1}
\end{align}
\end{remark}

\bigskip{}
\bigskip{}
\bigskip{}

We can also present a very interesting formula involving the Laguerre function. The next corollary is a very curious case of Theorem \ref{generalization whittaker first} happening when $k=2$. 
\begin{corollary}
For $-\frac{1}{2}<\text{Re}(\rho)<\frac{1}{2}$ and complex $x,y$ such that $\text{Re}(x)>|\text{Re}(y)|$, the following summation formula holds
\begin{align}
&1+\sum_{n=1}^{\infty}r_{2}(n)e^{-\pi n(x+y)}\,L_{\rho-\frac{1}{2}}\left(2\pi ny\right)\nonumber \\
=&\frac{1}{\sqrt{x^{2}-y^{2}}}\left(\frac{x-y}{x+y}\right)^{\rho}+\frac{1}{\sqrt{x^{2}-y^{2}}}\left(\frac{x-y}{x+y}\right)^{\rho}\,\sum_{n=1}^{\infty}r_{2}(n)e^{-\frac{\pi n}{x-y}}\,L_{-\rho-\frac{1}{2}}\left(\frac{2\pi ny}{x^{2}-y^{2}}\right),\label{Laguerre reduction formula!}
\end{align}
where $L_{\mu}(z)$ denotes the Laguerre function.
\end{corollary}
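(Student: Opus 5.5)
The plan is to specialize Theorem \ref{generalization whittaker first} to $k=2$ and rewrite the Whittaker functions $M_{\pm\rho,0}$ as Laguerre functions. No new analysis is needed: the hypotheses $-\frac{1}{2}<\text{Re}(\rho)<\frac{1}{2}$ and $\text{Re}(x)>|\text{Re}(y)|$ are exactly those of Theorem \ref{generalization whittaker first} with $k=2$. The identity then follows by algebraic manipulation and one limiting argument.

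\textbf{Step 1: the Laguerre reduction.} I use the Laguerre function in the form $L_{\mu}(z)={}_{1}F_{1}(-\mu;1;z)$, i.e.\ $L_\mu^{(0)}$. With $\nu=0$ in (\ref{NIST def Whittaker first kind}),
\[
M_{\rho,0}(z)=z^{\frac{1}{2}}e^{-z/2}\,{}_{1}F_{1}\left(\tfrac{1}{2}-\rho;1;z\right)=z^{\frac{1}{2}}e^{-z/2}L_{\rho-\frac{1}{2}}(z),
\]
and likewise $M_{-\rho,0}(z)=z^{\frac{1}{2}}e^{-z/2}L_{-\rho-\frac{1}{2}}(z)$.

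\textbf{Step 2: the left-hand side.} Put $k=2$ in (\ref{Formula Formula Formula Whittaker GENERAL}). Each term becomes
\[
n^{-\frac{1}{2}}e^{-\pi nx}M_{\rho,0}(2\pi ny)=\sqrt{2\pi y}\,e^{-\pi n(x+y)}L_{\rho-\frac{1}{2}}(2\pi ny).
\]
Hence the left-hand side equals $\sqrt{2\pi y}\,\bigl[1+\sum_{n\ge1} r_2(n)e^{-\pi n(x+y)}L_{\rho-\frac{1}{2}}(2\pi ny)\bigr]$.

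\textbf{Step 3: the right-hand side.} Each term becomes
\[
n^{-\frac{1}{2}}\left(\frac{2\pi ny}{x^{2}-y^{2}}\right)^{\frac{1}{2}}e^{-\frac{\pi nx}{x^{2}-y^{2}}-\frac{\pi ny}{x^{2}-y^{2}}}L_{-\rho-\frac{1}{2}}\left(\frac{2\pi ny}{x^{2}-y^{2}}\right).
\]
Since $\frac{x+y}{x^{2}-y^{2}}=\frac{1}{x-y}$, the exponential is $e^{-\pi n/(x-y)}$. The term $(2\pi y/(x^2-y^2))^{1/2}$ factors as $\sqrt{2\pi y}/\sqrt{x^{2}-y^{2}}$, and the same factorization applies to the leading term $\bigl(\frac{2\pi y}{x^{2}-y^{2}}\bigr)^{\frac{1}{2}}\bigl(\frac{x-y}{x+y}\bigr)^{\rho}$.

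\textbf{Step 4: cancel $\sqrt{2\pi y}$.} For $y\neq0$, dividing both sides by $\sqrt{2\pi y}$ gives (\ref{Laguerre reduction formula!}). The case $y=0$ is the theta relation (\ref{theta k squares formula}) for $k=2$, and it also follows from continuity in $y$. Continuity holds because both series converge locally uniformly in the region, by the same domination estimates used in the proof of the theorem.

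\textbf{Main obstacle.} The only delicate point is branch consistency for complex $y$ and $x^{2}-y^{2}$. The splittings $(2\pi ny)^{1/2}=\sqrt{n}\sqrt{2\pi y}$ and $\bigl(\frac{2\pi y}{x^2-y^2}\bigr)^{1/2}=\sqrt{2\pi y}/\sqrt{x^2-y^2}$ must use principal branches compatible with those implicit in the theorem.

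I would handle this by first proving the identity for real $x>y>0$, where every quantity is positive. Then I would extend it by analytic continuation in $(x,y)$ over the connected region $\text{Re}(x)>|\text{Re}(y)|$, after multiplying back by $\sqrt{2\pi y}$ so that both sides are analytic there. The resulting statement defines $\sqrt{x^{2}-y^{2}}=\sqrt{x-y}\sqrt{x+y}$, which is analytic since $\text{Re}(x\pm y)>0$.
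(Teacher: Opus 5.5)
Your proposal is correct and follows the paper's proof: specialize Theorem \ref{generalization whittaker first} to $k=2$, apply $M_{\pm\rho,0}(z)=\sqrt{z}\,e^{-z/2}L_{\pm\rho-\frac{1}{2}}(z)$, use $\frac{x+y}{x^{2}-y^{2}}=\frac{1}{x-y}$, and cancel $\sqrt{2\pi y}$; the paper does not discuss the branch or $y=0$ issues you raise. One small correction to your continuation remark: it is the identity \emph{after} dividing by $\sqrt{2\pi y}$ that is analytic on all of $\text{Re}(x)>|\text{Re}(y)|$, since it involves only $\sqrt{x\pm y}$, $(x\pm y)^{\pm\rho}$ and entire Laguerre functions, whereas $\sqrt{2\pi y}$ is not analytic there because $y$ may cross the negative real axis, so you should continue the divided form rather than multiply back.
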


\begin{proof}
The proof is just an application of (\ref{Formula Formula Formula Whittaker GENERAL})
with $k=2$ and the reduction formula for the Whittaker $M-$function
\[
M_{\rho,0}(z)=\sqrt{z}\,e^{-\frac{z}{2}}\,L_{\rho-\frac{1}{2}}\left(z\right).
\]
\end{proof}

\bigskip{}

We end this paper by presenting yet another interesting corollary, whose formal shape is strikingly similar to (\ref{formula for modified bessel in chapter 6}). 

\begin{corollary}

Let $x,y$ be two complex numbers such that $\text{Re}(x)>|\text{Re}(y)|$ and $k\geq 3$.
Then the following transformation formula holds
\begin{align}
(2\pi y)^{\frac{k}{4}}\sqrt{\frac{x-y}{x+y}}+2^{\frac{k}{2}-1}\Gamma\left(\frac{k}{4}\right)\pi y\sqrt{\frac{x-y}{x+y}}\,\sum_{n=1}^{\infty}\frac{r_{k}(n)}{n^{\frac{k}{4}-1}} & e^{-\pi nx}\,\left(I_{\frac{k}{4}}\left(\pi ny\right)+I_{\frac{k}{4}-1}\left(\pi ny\right)\right)\nonumber \\
=\left(\frac{2\pi y}{x^{2}-y^{2}}\right)^{\frac{k}{4}}+2^{\frac{k}{2}-1}\pi\,\Gamma\left(\frac{k}{4}\right)\,\frac{y}{x^{2}-y^{2}}\,\sum_{n=1}^{\infty}\frac{r_{k}(n)}{n^{\frac{k}{4}-1}} & e^{-\frac{\pi nx}{x^{2}-y^{2}}}\,\left(I_{\frac{k}{4}-1}\left(\frac{\pi ny}{x^{2}-y^{2}}\right) - I_{\frac{k}{4}}\left(\frac{\pi ny}{x^{2}-y^{2}}\right)\right).\label{Third corolllary!}
\end{align}
\end{corollary}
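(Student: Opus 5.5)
The plan is to specialize Theorem \ref{generalization whittaker first} to $\rho=-\frac{1}{2}$. The hypothesis $-\frac{k}{4}<\text{Re}(\rho)<\frac{k}{4}$ then reads $\frac{k}{4}>\frac{1}{2}$, i.e.\ $k>2$, which explains the restriction $k\geq 3$. The conditions on $x,y$ are exactly those of the theorem, so no new analytic work (interchanges, convergence) is needed. Everything reduces to rewriting the two Whittaker functions $M_{-\frac{1}{2},\frac{k}{4}-\frac{1}{2}}$ and $M_{\frac{1}{2},\frac{k}{4}-\frac{1}{2}}$ in terms of modified Bessel functions, and then clearing the factor $\left(\frac{x-y}{x+y}\right)^{-1/2}$.

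\textbf{Step 1: the two reductions.} Set $\nu=\frac{k}{4}-\frac{1}{2}$, so $2\nu+1=\frac{k}{2}$. By (\ref{NIST def Whittaker first kind}):
\begin{itemize}
\item $M_{-\frac12,\nu}(2z)$ involves ${}_1F_1\left(\frac{k}{4}+\frac12;\frac{k}{2};2z\right)$;
\item $M_{\frac12,\nu}(2z)$ involves ${}_1F_1\left(\frac{k}{4}-\frac12;\frac{k}{2};2z\right)$.
\end{itemize}
Both are of the form ${}_1F_1(\mu\pm\frac12;2\mu;2z)$ with $\mu=\frac{k}{4}$. I will invoke the classical Bessel reductions (NIST §13.6)
\[
{}_1F_1\left(\mu\pm\tfrac12;2\mu;2z\right)=\Gamma(\mu)\,e^{z}\left(\tfrac{z}{2}\right)^{\frac12-\mu}\left(I_{\mu-\frac12}(z)\pm I_{\mu+\frac12}(z)\right).
\]
If I want to avoid citing them, they can be derived from the case ${}_1F_1(\mu+\frac12;2\mu+1;2z)=\Gamma(\mu+1)e^{z}(z/2)^{-\mu}I_\mu(z)$ (which is the reduction $M_{0,\nu}$ already used in the previous corollary), together with a Kummer contiguous relation and $I_{\mu-1}\pm I_{\mu+1}$ recurrences. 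With $\mu=\frac{k}{4}$ the Bessel indices become $\frac{k}{4}-1$ and $\frac{k}{4}$. The $e^{-z}$ factor from the Whittaker definition cancels the $e^{z}$ above, leaving
\[
M_{\mp\frac12,\frac{k}{4}-\frac12}(2z)=C_k\, z\left(I_{\frac{k}{4}-1}(z)\pm I_{\frac{k}{4}}(z)\right),
\]
with an explicit constant $C_k$ built from $\Gamma(\frac{k}{4})$ and powers of $2$.

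\textbf{Step 2: substitute into (\ref{Formula Formula Formula Whittaker GENERAL}).} On the left, take $z=\pi n y$. The factor $z=\pi n y$ converts $n^{-k/4}$ into $n^{1-k/4}$ and produces $\pi y$. On the right, take $z=\frac{\pi n y}{x^2-y^2}$, which produces the factor $\frac{y}{x^2-y^2}$.

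\textbf{Step 3: clear the prefactor.} With $\rho=-\frac12$ the right-hand side carries the factor $\left(\frac{x-y}{x+y}\right)^{-1/2}$. Multiply the whole identity by $\sqrt{\frac{x-y}{x+y}}$, taking the principal branch, consistent with the one used in the theorem. This moves the square root to the left side and gives exactly the shape of (\ref{Third corolllary!}).

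\textbf{Main obstacle.} The only delicate point is the bookkeeping of constants. I need to check that $C_k$ equals $2^{\frac{k}{2}-1}\Gamma(\frac{k}{4})$ times the correct power of $2$ so that both sides match the displayed normalization, and that the signs ($+$ on the left, $-$ on the right) come out as stated. I would verify this first by comparing leading terms as $z\to0$: use $M_{\mu,\nu}(z)\sim z^{\nu+\frac12}$ against the small-$z$ behaviour of $I_{\frac{k}{4}-1}(z)$.
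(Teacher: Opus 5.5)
Your plan is correct and is the paper's proof: set $\rho=-\frac12$ in Theorem \ref{generalization whittaker first} (this is exactly where $k\geq 3$ comes from), rewrite $M_{-\frac12,\frac{k}{4}-\frac12}$ and $M_{\frac12,\frac{k}{4}-\frac12}$ via their Bessel reductions, and multiply through by $\sqrt{\frac{x-y}{x+y}}$. The one thing to fix is the ${}_1F_1$ reduction you display: as written it gives indices $\frac{k}{4}\mp\frac12$ and a $\sqrt{z}$ prefactor, and it fails as $z\to0$, since its right side tends to $\Gamma(\mu)/\Gamma(\mu+\frac12)$ rather than $1$; the correct form is ${}_1F_1(\mu\pm\frac12;2\mu;2z)=\Gamma(\mu)\,e^{z}(z/2)^{1-\mu}\left(I_{\mu-1}(z)\pm I_{\mu}(z)\right)$, which yields the indices $\frac{k}{4}-1,\frac{k}{4}$ you actually use and $C_k=2^{\frac{k}{2}-1}\Gamma(\frac{k}{4})$ exactly, i.e. the paper's $M_{\mp\frac12,\nu}(z)=2^{2\nu-1}\Gamma(\nu+\frac12)\,z\left(I_{\nu-\frac12}(\frac{z}{2})\pm I_{\nu+\frac12}(\frac{z}{2})\right)$.
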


\begin{proof}
Taking $\rho=-\frac{1}{2}$ in the general formula (\ref{Formula Formula Formula Whittaker GENERAL})
and using the reduction formulas for the Whittaker function,
\[
M_{-\frac{1}{2},\nu}\left(z\right)=2^{2\nu-1}\Gamma\left(\nu+\frac{1}{2}\right)\,z\,\left(I_{\nu+\frac{1}{2}}\left(\frac{z}{2}\right)+I_{\nu-\frac{1}{2}}\left(\frac{z}{2}\right)\right),
\]
\[
M_{\frac{1}{2},\nu}(z)=2^{2\nu-1}\Gamma\left(\nu+\frac{1}{2}\right)\,z\,\left(I_{\nu-\frac{1}{2}}\left(\frac{z}{2}\right)-I_{\nu+\frac{1}{2}}\left(\frac{z}{2}\right)\right),
\]
we get, for $k\geq 3$,
\begin{align*}
(2\pi y)^{\frac{k}{4}}\sqrt{\frac{x-y}{x+y}}+2^{\frac{k}{2}-1}\Gamma\left(\frac{k}{4}\right)\pi y\sqrt{\frac{x-y}{x+y}}\,\sum_{n=1}^{\infty}\frac{r_{k}(n)}{n^{\frac{k}{4}-1}} & e^{-\pi nx}\,\left(I_{\frac{k}{4}}\left(\pi ny\right)+I_{\frac{k}{4}-1}\left(\pi ny\right)\right)\\
=\left(\frac{2\pi y}{x^{2}-y^{2}}\right)^{\frac{k}{4}}+2^{\frac{k}{2}-1}\pi\,\Gamma\left(\frac{k}{4}\right)\,\frac{y}{x^{2}-y^{2}}\,\sum_{n=1}^{\infty}\frac{r_{k}(n)}{n^{\frac{k}{4}-1}} & e^{-\frac{\pi nx}{x^{2}-y^{2}}}\,\left(I_{\frac{k}{4}-1}\left(\frac{\pi ny}{x^{2}-y^{2}} \right) - I_{\frac{k}{4}}\left(\frac{\pi ny}{x^{2}-y^{2}}\right)\right). 
\end{align*}

\end{proof}
\begin{remark}
It is actually possible to get analogues of the summation formula (\ref{Third corolllary!})
by specifying $\rho=-\frac{\ell}{2}$, $\ell\in\mathbb{N}$, in (\ref{Formula Formula Formula Whittaker GENERAL}).
But the resulting formula is somewhat cumbersome, so we leave
the details of this to the reader.
\end{remark}

\end{document}